\newtheorem{thm}{Theorem}[section]
\newtheorem{lem}[thm]{Lemma}
\newtheorem{prop}[thm]{Proposition}
\newtheorem{defn}{Definition}[section]
\newenvironment{proof}{\noindent\emph{Proof.}}{\hfill$\square$\medskip}
\newcommand{\rn}{\mathbb{R}^{n}}
\newcommand{\s}{\mathcal{S}}
\newcommand{\Pg}{P^n_{g_0}}
\DeclareMathOperator{\sign}{sign}
\title{Existence of entire solutions to a fractional Liouville equation in $\rn$}
\author{Ali Hyder \thanks{The author is supported by the Swiss National Science Foundation.} 
\\ {\small Universit\"at Basel}\\ {\small \texttt{ali.hyder@unibas.ch}} }
\begin{document}

\maketitle

\begin{abstract}
We study the existence of solutions to the problem
$$ (-\Delta)^{\frac{n}{2}}u=Qe^{nu}\quad\text{in }\rn,\quad V:=\int_{\rn}e^{nu}dx<\infty,$$
where $Q=(n-1)!$ or $Q=-(n-1)!$. Extending the works of Wei-Ye and Hyder-Martinazzi to arbitrary odd dimension $n\geq 3$ we show that to a certain extent the asymptotic behavior of $u$
 and the constant $V$ can be prescribed simultaneously. Furthermore if $Q=-(n-1)!$ then $V$ can be chosen to be any positive number. This is in contrast to the case $n=3$, $Q=2$, where Jin-Maalaoui-Martinazzi-Xiong showed that necessarily $V\le |S^3|$, and to the case $n=4$, $Q=6$, where C-S. Lin showed that $V\le |S^4|$. 
\end{abstract}

 \section{Introduction to the problem}
 In this paper we consider the equation 
\begin{align} \label{main-eq}
 (-\Delta)^{\frac{n}{2}}u=(n-1)!e^{nu}\quad \text{in $\rn$},
\end{align}
where $n\ge 1$ and 
\begin{align}\label{V}
 V:=\int_{\rn}e^{nu}dx<\infty.
\end{align}
The operator $(-\Delta)^{\frac{n}{2}}$ can be defined as follows. 
For $s>0$ we set
$$\widehat{(-\Delta)^s \varphi }(\xi):=|\xi|^{2s}\hat{\varphi}(\xi),\quad \text{for }\varphi\in \s(\rn),$$
where
$$\mathcal{S}(\rn):=\left\{u\in C^{\infty}(\rn): \sup_{x\in\rn}|x|^N|D^{\alpha}u(x)|<\infty \text{ for all } N\in\mathbb{N} \text{ and } \alpha\in
 \mathbb{N}^n\right\}$$ is the Schwartz space. 
Define the space 
$$L_{s}(\rn):=\left\{v\in L^1_{loc}(\rn):\int_{\rn}\frac{|v(x)|}{1+|x|^{n+2s}}dx<\infty\right\}.$$ 
Then we have the following definition:

\begin{defn}\label{sol}
Let $f\in L^1(\rn)$.
A function $u\in L_{\frac n2}(\rn)$ is said to be a solution of 
$$(-\Delta)^{\frac{n}{2}}u=f \quad \text{in }\rn,$$if 
 \begin{align}\label{def}
  \int_{\rn}u(-\Delta)^{\frac{n}{2}}\varphi dx=\int_{\rn}f\varphi dx, \text{ for every }\varphi\in\s(\rn),
 \end{align}
where the integral on the left-hand side of \eqref{def} is well-defined thanks to Proposition \ref{ests} in the appendix.
\end{defn}
 
 Geometrically if $u$ is a smooth solution of \eqref{main-eq}-\eqref{V} then the conformal metric $g_u:=e^{2u}|dx|^2$ on $\rn$ ($|dx|^2$ is the 
Euclidean metric on $\rn$) has constant $Q$-curvature equal to $(n-1)!$. Moreover the volume and the total $Q$-curvature of the metric $g_u$ are
$V$ and $(n-1)!V$ respectively. 

It is well-known that   
\begin{equation}\label{uspher}
u_{\lambda,x_0}(x):=\log\frac{2\lambda}{1+\lambda^2|x-x_0|^2},
\end{equation}
is a solution of \eqref{main-eq}-\eqref{V} with $V=|S^n|$ for every $\lambda>0$ and $x_0\in\rn$. 
Any such $u_{\lambda,x_0}$ is called spherical solution, because it can be obtained by pulling back the round metric of $S^n$ onto $\rn$ via stereographic projection. When $n=2$ W. Chen-C. Li \cite{Chen-Li} showed that these are the only solutions to \eqref{main-eq}-\eqref{V}. On the other hand in higher even dimension non-spherical
solutions do exist as shown by A. Chang-W. Chen \cite{CC}:

\medskip
\noindent \textbf{Theorem A (\cite{CC})}
\emph{Let $n\geq 4$ be any even number. Then for every $V\in (0,|S^n|)$ there exists at least one solution to \eqref{main-eq}-\eqref{V}. 
}

\medskip

Moreover J. Wei and D. Ye \cite{W-Y} in dimension $4$ and A. Hyder and L. Martinazzi \cite{H-M} for arbitrary even dimension $n\geq 4$ proved the existence of solution to \eqref{main-eq}-\eqref{V}
with prescribed volume and asymptotic behavior in the following sense:

\medskip
\noindent \textbf{Theorem B (\cite{W-Y}, \cite{H-M})}
\emph{Let $n\ge 4$ be even. For a given  $V\in (0,|S^n|)$, and a given polynomial $P$ such that degree$(P)\leq n-2$ and
\begin{equation}\label{condP}
x\cdotp \nabla P(x)\to\infty \quad \text{as } |x|\to\infty,
\end{equation}
there exists a solution $u$ to \eqref{main-eq}-\eqref{V} having the asymptotic behavior
\begin{align}\label{asmp}
 u(x)=-P(x)-\alpha\log|x|+C+o(1),
\end{align}
where $\alpha:=\frac{2V}{|S^n|}$, and  $o(1)\to 0$ as $|x|\to\infty$.}

\medskip

When $n$ is odd things are more complex as the operator $(-\Delta)^{\frac{n}{2}}$ is nonlocal. In a recent work T. Jin, A. Maalaoui, L. Martinazzi, J. Xiong \cite{TALJ} have proven:


\medskip
\noindent \textbf{Theorem C (\cite{TALJ})}
\emph{For every   $V\in (0,|S^3|)$ there exists at least one smooth solution to \eqref{main-eq}-\eqref{V}} with $n=3$.
 
 \medskip
 
Extending the results of \cite{CC}, \cite{W-Y}, \cite{H-M} and \cite{TALJ} to arbitrary odd dimension $n\ge 3$ we prove the following theorem about the existence of solutions to \eqref{main-eq}-\eqref{V}
with prescribed asymptotic behavior:

 \begin{thm}\label{thm-1}
Let $n\geq 3$ be an odd integer. For any given $V\in(0, |S^n|)$ and any given polynomial $P$ of degree at most $n-1$ such that
\begin{equation}\label{condP2}
P(x)\to\infty \quad \text{as } |x|\to\infty,
\end{equation}
there exists a $u\in C^{\infty}(\rn)\cap L_{\frac n2}(\rn)$ solution to \eqref{main-eq}-\eqref{V} having the asymptotic behavior given in \eqref{asmp} with $\alpha=\frac{2V}{|S^n|}$.
\end{thm}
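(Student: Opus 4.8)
The plan is to reduce \eqref{main-eq}--\eqref{V} to a single nonlinear integral equation built from the fundamental solution of $(-\Delta)^{\frac n2}$, and then to solve that equation by a fixed-point argument in which the prescribed volume $V$ is enforced by a free additive constant. The starting point is that, although $n$ is odd and the operator is nonlocal, $\frac{1}{\gamma_n}\log\frac{1}{|x|}$ with $\gamma_n:=\frac{(n-1)!}{2}|S^n|$ is still a fundamental solution of $(-\Delta)^{\frac n2}$ in the sense of Definition \ref{sol}; I would establish this (and the fact that the resulting logarithmic potential lies in $L_{\frac n2}(\rn)$ and is admissible in \eqref{def}) using the estimates of Proposition \ref{ests}. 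The hypothesis $\deg P\le n-1$ plays two roles here: it guarantees $P\in L_{\frac n2}(\rn)$ with $(-\Delta)^{\frac n2}P=0$, so that adding $-P$ does not alter the right-hand side of the equation; and, together with $P(x)\to\infty$ from \eqref{condP2}, it makes $e^{-nP}$ decay fast enough to keep $V$ finite.

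Concretely, I would look for a solution of the form
\begin{equation*}
u(x)=\frac{(n-1)!}{\gamma_n}\int_{\rn}\log\frac{|y|}{|x-y|}\,e^{nu(y)}\,dy-P(x)+c,
\end{equation*}
where $c\in\mathbb{R}$ is to be determined. Any such $u$ automatically solves \eqref{main-eq} by the fundamental-solution property, and its asymptotics are governed by the expansion $\log\frac{|y|}{|x-y|}=-\log|x|+O(1)$ as $|x|\to\infty$: the logarithmic potential contributes $-\frac{(n-1)!V}{\gamma_n}\log|x|=-\alpha\log|x|$ with $\alpha=\frac{2V}{|S^n|}$ exactly as in \eqref{asmp}, while $-P(x)$ supplies the prescribed polynomial part and the remaining bounded terms combine into $C+o(1)$.

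To actually produce $u$, I would set $\rho=e^{nu}$ and run a Schauder fixed-point scheme on the convex set of nonnegative densities of total mass $V$: given such a $\rho$, form the potential above, choose $c=c(\rho)$ as the unique constant for which $\int_{\rn}e^{nu_\rho}\,dy=V$ (this constant exists and is unique because the integral is continuous and strictly increasing in $c$ and exhausts $(0,\infty)$, using the fast decay of $e^{-nP}$), and then map $\rho\mapsto e^{nu_\rho}$. A fixed point is a genuine solution with the correct volume, and the smoothing of the logarithmic convolution together with the super-polynomial decay coming from $-P$ furnishes the needed compactness. Finally, regularity theory for $(-\Delta)^{\frac n2}$ upgrades the fixed point to a $C^\infty(\rn)\cap L_{\frac n2}(\rn)$ solution.

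The main obstacle I anticipate is the nonlocality. Because $(-\Delta)^{\frac n2}$ admits no pointwise maximum principle in odd dimensions, the a priori estimates that keep the fixed-point set invariant and the map compact --- and in particular the exclusion of concentration, which is precisely what confines the construction to $V<|S^n|$ --- must be carried out through global integral bounds rather than comparison arguments. A second delicate point is reading off the asymptotics \eqref{asmp} with the exact coefficient $\alpha$ and a finite constant $C$: this requires a careful expansion of the nonlocal potential $\int_{\rn}\log\frac{|y|}{|x-y|}\rho\,dy$ to the relevant order, showing that it grows only logarithmically (guaranteed by the decay of $\rho$ forced by $e^{-nP}$) and contributes no spurious polynomial terms that would spoil the prescribed profile $-P$.
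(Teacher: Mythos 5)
Your reduction framework is actually close in spirit to the paper's: the paper also builds the solution as $-P$ plus a logarithmic term plus a correction, uses the fact that $(-\Delta)^{\frac n2}$ annihilates polynomials of degree $\le n-1$, and enforces the volume through an additive normalizing constant ($c_w$ in Theorem \ref{thm-cv}, your $c$). The genuine gap is in the existence mechanism itself: your Schauder scheme defers precisely the step that constitutes the proof, and that step is not a technicality, because nothing in your argument ever uses $V<|S^n|$ quantitatively --- yet the statement is false for $V\ge|S^n|$ when $n=3,4$ (\cite{TALJ}, \cite{Lin}), so any correct proof must invoke $\alpha=\frac{2V}{|S^n|}<2$ somewhere. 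In fact the compactness you need genuinely fails on the set you propose. First, for a general nonnegative $\rho\in L^1$ of mass $V$ the potential $\int_{\rn}\log\frac{|y|}{|x-y|}\rho(y)\,dy$ need not even be defined (take $\rho(y)\sim|y|^{-n}(\log|y|)^{-3}$, for which $\int\log|y|\,\rho\,dy=+\infty$), so the map is not defined on the whole convex set. Second, and more seriously, concentration destroys precompactness of the image: take $\rho_\epsilon=V|B_\epsilon|^{-1}\chi_{B_\epsilon}$. Then for $|x|\ge\epsilon$ one gets $u_{\rho_\epsilon}(x)\approx\alpha\log\frac{\epsilon}{|x|}-P(x)+c$, while $u_{\rho_\epsilon}\approx -P+c$ on $B_\epsilon$; a direct computation shows that for $1\le\alpha<2$ the normalized image density $e^{nu_{\rho_\epsilon}}$ keeps a fixed positive fraction of its mass inside $B_\epsilon$ as $\epsilon\to0$. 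Hence the image of your convex set contains sequences with no $L^1$-convergent subsequence (they converge weak-$*$ to a measure with an atom), so it lies in no compact subset of $L^1$, and Schauder's theorem cannot be applied; switching to a weak topology instead breaks the continuity of $\rho\mapsto e^{nu_\rho}$. Ruling out exactly this concentration requires a Moser--Trudinger-type inequality, which your proposal names as an ``anticipated obstacle'' but never supplies.

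The paper overcomes this obstacle by abandoning the fixed-point formulation in favor of a variational one on the sphere: after stereographic projection the problem becomes $\Pg u+\alpha\tilde{\varphi}_1=C_0\tilde Ke^{-nw_0\circ\pi}e^{nu}$, and one minimizes the functional $J$ on $H^{\frac n2}(S^n)$. Beckner's inequality (Theorem \ref{bec}) yields coercivity of $J$ with the constant $\frac{2-\alpha}{4}$, and this is the single place where $V<|S^n|$ (i.e.\ $\alpha<2$) enters; weak lower semicontinuity, the compactness of $v\mapsto e^v$ on $H^{\frac n2}(S^n)$, the regularity lemmas, and Branson's conformal identity then produce a smooth solution on $\rn$ whose correction term $w$ has a finite limit at infinity, which makes the asymptotics \eqref{asmp} immediate rather than requiring your delicate expansion of the nonlocal potential. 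If you wish to salvage your integral-equation route, you would have to import an inequality of Beckner/Adams strength to obtain the uniform integrability your compactness step needs --- at which point you would essentially be reconstructing the paper's variational argument.
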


Notice that, contrary to the result of Theorem C, in Theorem \ref{thm-1} we can now prescribe both the asymptotic  behaviour and the volume, similar to Theorem B, but in fact in more generality, since the condition \eqref{condP} has been replaced by the weaker condition \eqref{condP2}. Actually with minor modifications one can prove that the condition \eqref{condP2} also suffices in even dimension.
On the other hand we do not expect this assumption to be optimal, compare to Theorem D below.


We also remark that the condition $0<V<|S^n|$ is necessary for the existence of non-spherical solution to \eqref{main-eq}-\eqref{V} in dimension 3 and 4 as shown in \cite{TALJ} and \cite{Lin}
 respectively, but in higher dimension solutions could exist for large $V$, as shown for instance in dimension 6 by L. Martinazzi \cite{LM-vol}. 

Also the condition $n\ge 3$ in Theorem \ref{thm-1} is necessary, since for $n=1$ any solution of \eqref{main-eq}-\eqref{V} is spherical, i.e. as in \eqref{uspher}, see F. Da Lio, L. Martinazzi and T. Rivi\`ere \cite{DMR}.

\medskip

Now we move from the problem of existence to the problem of studying the most general asymptotic behavior of solutions to \eqref{main-eq}-\eqref{V}.

For $n$ even we have this result due to C.S. Lin for $n=4$ and L. Martinazzi when $n\ge 6$:
\medskip

\noindent \textbf{Theorem D (\cite{Lin}, \cite{LM})}
\emph{Any solution $u$ of \eqref{main-eq}-\eqref{V} with $n$ even has the asymptotic behavior 
\begin{align}\label{asymp}
u(x)=-P(x)-\alpha\log|x| +o(\log|x|)
\end{align}
where $\alpha=\frac{2V}{|S^n|}$, $\frac{o\left(\log|x|\right)}{\log|x|}\to 0$ as $|x|\to\infty$ and $P$ is a polynomial bounded from below and of degree at most $n-2$.}  

\medskip

Under certain regularity assumptions T. Jin, A. Maalaoui, L. Martinazzi, J. Xiong \cite{TALJ} extended the above result to dimension $3$, and among other things they proved:

\medskip
\noindent \textbf{Theorem E (\cite{TALJ})}
\emph{Let $u\in W^{2,1}_{loc}(\mathbb{R}^3)$ be such that  $\Delta u\in L_{\frac 12}(\mathbb{R}^3)$ and $u$ satisfies \eqref{V}. If $u$ solves \eqref{main-eq} in the  sense that
$$\int_{\mathbb{R}^3}(-\Delta)u(-\Delta)^{\frac 12}\varphi dx=2\int_{\mathbb{R}^3}e^{3u}\varphi dx,\quad\text{for every }\varphi\in\s(\mathbb{R}^3),$$  then $u$ has the asymptotic behavior 
given by \eqref{asymp} with $\alpha=\frac{2V}{|S^3|}$ and $P$ is a polynomial bounded from below and of degree $0$ or $2$.}  

\medskip

In our upcoming paper \cite{Hyd} extending Theorem E  we study the asymptotic behavior of solutions to \eqref{main-eq}-\eqref{V} in arbitrary odd dimension under much weaker regularity assumptions:

\medskip
\noindent \textbf{Theorem F(\cite{Hyd})}
\emph{Let $n\geq 3$ be any odd integer. Let $u\in L_{\frac n2}(\rn)$ be a solution to \eqref{main-eq}-\eqref{V} in the sense of Definition \ref{sol}. 
Then $u$ has the asymptotic behavior 
given by \eqref{asymp} with $\alpha=\frac{2V}{|S^n|}$ and $P$ a polynomial bounded from below and of degree at most $n-1$.}  

\medskip

Now we shall discuss the case when the $Q$-curvature is negative. We consider the equation 
\begin{align}\label{neg}
 (-\Delta)^{\frac{n}{2}}u=-(n-1)!e^{nu}\quad \text{in $\rn$}. 
\end{align}

 Geometrically a smooth solution of \eqref{neg} corresponds to a conformally flat metric $g_u=e^{2u}|dx|^2$ on $\rn$ which has constant $Q$-curvature $-(n-1)!$.
 
 In even dimension $n\geq 4$ L. Martinazzi \cite{LM-Neg} has shown that any solution $u$ to \eqref{neg}-\eqref{V} has the asymptotic behavior given by \eqref{asymp}
 with $\alpha=-\frac{2V}{|S^n|}$,  while for any $V>0$ and any given polynomial $P$ of degree at most $n-2$ satisfying \eqref{condP}, A. Hyder-L. Martinazzi \cite{H-M} have proven the existence of solutions
 to \eqref{neg}-\eqref{V} having the asymptotic behavior given by
 \eqref{asmp} with $\alpha=-\frac{2V}{|S^n|}$. As in the positive case, we shall extend this existence result to arbitrary odd dimension $n\geq 3$, again replacing condition \eqref{condP} with the weaker condition \eqref{condP2}.
 
\begin{thm}\label{thm-2}
Let $n\geq 3$ be an odd integer. For any given $V>0$ and any given polynomial $P$ of degree at most $n-1$ satisfying \eqref{condP2} there exists a 
$u\in C^{\infty}(\rn)\cap L_{\frac n2}(\rn)$ solution to \eqref{neg}-\eqref{V} having the asymptotic behavior given in \eqref{asmp} with $\alpha=-\frac{2V}{|S^n|}$.
 \end{thm}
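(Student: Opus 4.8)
The plan is to invert the nonlocal operator by its fundamental solution, recast \eqref{neg}--\eqref{V} as an integral equation with a free additive constant, and then tune that constant to realize the prescribed volume. Recall that $\frac{2}{(n-1)!|S^n|}\log\frac{1}{|x|}$ is the fundamental solution of $(-\Delta)^{\frac n2}$ on $\rn$, and that every polynomial of degree at most $n-1$ is annihilated by $(-\Delta)^{\frac n2}$ in the sense of Definition \ref{sol}. Hence, for a constant $c\in\mathbb R$ I would look for $u$ solving
\begin{equation}\label{fp}
u(x)=c-P(x)-\frac{2}{|S^n|}\int_{\rn}\log\frac{|y|}{|x-y|}\,e^{nu(y)}\,dy,
\end{equation}
which is arranged precisely so that $(-\Delta)^{\frac n2}u=-(n-1)!e^{nu}$. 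The normalization by $\log\frac{|y|}{|x-y|}$ (rather than $\log\frac{1}{|x-y|}$) makes the kernel integrable against $e^{nu}\,dx$ and yields, as $|x|\to\infty$, the logarithmic term $\frac{2V}{|S^n|}\log|x|=-\alpha\log|x|$ with $\alpha=-\frac{2V}{|S^n|}$. The hypothesis \eqref{condP2} forces $\deg P\ge 1$, so $-P(x)\to-\infty$ dominates this logarithmic growth; this is exactly what guarantees $u(x)\to-\infty$ and hence $e^{nu}\in L^1(\rn)$, i.e.\ \eqref{V}.

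First I would solve \eqref{fp} for each fixed $c$ by a fixed-point argument in a weighted space whose norm controls both $\|e^{nu}\|_{L^1}$ and local regularity, measuring $u+P$ against the expected profile $-\alpha\log(2+|x|)$. The decay produced by subtracting $P$ renders the nonlinear integral term a controllable perturbation, so that a Schauder fixed-point theorem together with uniform a priori bounds produces a solution $u_c$ of \eqref{fp}; the negative sign of the nonlinearity is what keeps these bounds from degenerating and, in contrast to the positive case, imposes no ceiling on the admissible mass. A bootstrap using the smoothing of $(-\Delta)^{-\frac n2}$ and the local integrability of $e^{nu_c}$ then gives $u_c\in C^\infty(\rn)\cap L_{\frac n2}(\rn)$, and the construction shows $u_c$ solves \eqref{neg} in the sense of Definition \ref{sol}.

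It remains to hit the prescribed volume. Setting $V(c):=\int_{\rn}e^{nu_c}\,dx$, I would show that $c\mapsto u_c$, and hence $c\mapsto V(c)$, is continuous, that $V(c)\to 0$ as $c\to-\infty$ and $V(c)\to+\infty$ as $c\to+\infty$ (near the origin $u_c\approx c-P$, so $e^{nu_c}\approx e^{nc}e^{-nP}$ up to a bounded factor, which drives these limits). By the intermediate value theorem some $c$ gives $V(c)=V$, for any prescribed $V>0$; this freedom, absent in the positive case where $V<|S^n|$ is forced, is precisely the mechanism behind the unrestricted range of $V$. Finally, reading the asymptotics off \eqref{fp} gives \eqref{asmp} with this $\alpha$, the constant $C$ being $c-\frac{2}{|S^n|}\int_{\rn}\log|y|\,e^{nu}\,dy$.

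The main obstacle I anticipate lies in the fixed-point step under the genuine nonlocality of $(-\Delta)^{\frac n2}$ for odd $n$: the kernel $\log\frac{|y|}{|x-y|}$ is sign-changing and only conditionally convergent, so the weighted estimates must simultaneously tame the local logarithmic singularity and the slowly decaying far-field tails---tails that, unlike in the even-dimensional (local) setting of \cite{H-M}, cannot be ignored---while keeping the scheme self-consistent, since $e^{nu}$ sits inside the exponential on the right-hand side of \eqref{fp}. Controlling these tails uniformly enough to close the fixed point, and to make $V(c)$ continuous with the stated limits, is where the real work will be.
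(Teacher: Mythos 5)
Your route is genuinely different from the paper's, so let me first record the difference. The paper never inverts $(-\Delta)^{\frac n2}$ through its fundamental solution: it transplants the problem to $S^n$ via the stereographic projection and Branson's identity $(-\Delta)^{\frac n2}(v\circ\pi^{-1})=e^{nw_0}(\Pg v)\circ\pi^{-1}$, and solves the transplanted equation \eqref{eq_1} by direct minimization of the functional $J$ on $H^{\frac n2}(S^n)$ (Theorem \ref{thm-cv}): coercivity for $\alpha<0$ comes from Beckner's inequality together with a Jensen lower bound (this is exactly where the hypothesis $|K|>\delta e^{-\delta|x|^p}$ enters), compactness from the embedding $v\mapsto e^v$ on the compact sphere, and regularity from the spectral Lemmas \ref{Hn} and \ref{regularity}. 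Crucially, the prescribed volume is not obtained by shooting: it is hard-wired into \eqref{eq_1} through the normalizing constant $c_w$ and the choice $\alpha=-\frac{2V}{|S^n|}$, so that $u=-P+\alpha u_0+w+c_w$ satisfies $\int_{\rn}e^{nu}dx=\frac{|\alpha|}{2}|S^n|=V$ identically; no intermediate value argument, and hence no continuity of a solution branch, is ever needed.

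Measured as a proof, your proposal has genuine gaps, located precisely at the two steps you yourself defer to "the real work". First, the Schauder step is not set up: no Banach space, no invariant convex set, no a priori bound. This omission is not routine here, because $V>0$ is arbitrary --- the whole point of Theorem \ref{thm-2} --- so there is no smallness parameter. Indeed, a crude invariant-set computation closes only in the perturbative regime: for $v$ with $\int_{\rn}e^{nv}dy\le M$ and suitable logarithmic moment bounds, your fixed-point map satisfies $T(v)\le c-P+\frac{2M}{|S^n|}\log(1+|x|)+C$, so the mass of the image is at most $e^{n(c+C)}\int_{\rn}e^{-nP(x)}(1+|x|)^{2nM/|S^n|}dx$; since $P$ grows at most polynomially, this integral grows super-exponentially in $M$, and a self-mapped set exists only when $e^{nc}$ is small, i.e.\ only for small volume. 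For large $c$ the scheme does not close without exploiting finer structure (the self-defeating nature of concentration for negative curvature), which is exactly what the paper's variational framework supplies for free. Second, the shooting step: Schauder gives existence but not uniqueness, so $V(c)$ is a priori multivalued and "continuity of $c\mapsto V(c)$" is not even well posed without a selection, a continuation or degree argument, or a uniqueness proof, none of which is sketched; moreover your justification that $V(c)\to\infty$ ("near the origin $u_c\approx c-P$ up to a bounded factor") is circular, since the correction is the potential of $e^{nu_c}$ and bounding it uniformly in $c$ requires precisely the missing a priori estimates. A smaller point: twice you use that \eqref{condP2} forces $e^{-nP}$ to beat the logarithmic (indeed any polynomial) growth; this is true but rests on the nontrivial \L ojasiewicz-type fact that a polynomial tending to $+\infty$ grows at least like $|x|^{q}$ for some $q>0$, which should be proved or cited. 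In sum, the architecture is coherent and genuinely different from the paper's, but the two pillars carrying all of the difficulty are asserted rather than proved.
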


Finally we remark that in dimension $1$ and $2$ \eqref{neg}-\eqref{V} has no solution (compare to \cite{LM-Neg} and \cite{DMR}).
 



\medskip

\noindent\textbf{Acknowledgements} I would like to thank Prof. Luca Martinazzi for introducing me to this problem and his constant help throughout the work.


 \section{Proof of Theorem \ref{thm-1} and Theorem \ref{thm-2}}
 
 The proof of Theorem \ref{thm-1} and \ref{thm-2} rests on the following  theorem:
\begin{thm}\label{thm-cv}
Let $w_0(x)=\log\frac{2}{1+|x|^2}$ and let $\pi:S^n\setminus \{N\}\to \rn$ be the stereographic projection and $N=(0,\dots,0,1)\in S^n$ be the North pole.
Take any number $\alpha\in (-\infty,0)\cup (0,2)$ and consider two functions $K,\varphi\in C^{\infty}(\rn)$ such that
 \begin{align}\label{cond-phi}
  \int_{\rn}\varphi dx=\gamma_n:=\frac{(n-1)!}{2}|S^n|, 
 \end{align}
$\alpha K> 0$ everywhere in $\rn$ and whenever $\alpha<0$ then $|K|>\delta e^{-\delta|x|^p}$ for some $\delta>0$, $0<p<n$. If both of $Ke^{-nw_0}$ and $\varphi e^{-nw_0}$ can be extend as $C^{2n+1}$ function
 on $S^n$ via the stereographic projection $\pi$  then the problem 
  \begin{equation}\label{eq_1}
   (-\Delta)^{\frac{n}{2}}w=Ke^{n(w+c_w)}-\alpha\varphi \quad  \text{in $\rn$}, \quad c_w:=-\frac{1}{n}\log\left(\frac{1}{\alpha\gamma_n}\int_{\rn}Ke^{nw}dx\right),
  \end{equation}
has at least one solution $w\in C^{\infty}(\rn)\cap L_{\frac n2}(\rn)$ (in the sense of Definition \ref{sol}) so that $\lim_{|x|\to\infty} w(x)\in\mathbb{R}$.   
\end{thm}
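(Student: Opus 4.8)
The plan is to transplant \eqref{eq_1} to the round sphere $S^n$, where it becomes a prescribed $Q$-curvature equation on a compact manifold, and then to produce a solution by the direct method of the calculus of variations, the compactness being supplied by the sharp Moser--Trudinger--Beckner inequality.

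The starting point is the conformal covariance of the critical GJMS operator $\Pg$ of $(S^n,g_0)$. Since $\pi^{\ast}g_0=e^{2w_0}|dx|^2$, for a function $U$ on $S^n$ the two operators are intertwined, under the identification $v:=U\circ\pi$, by $(-\Delta)^{\frac n2}(U\circ\pi)=e^{nw_0}\,(\Pg U)\circ\pi$. Dividing \eqref{eq_1} by $e^{nw_0}$ and setting
\[
\tilde K:=Ke^{-nw_0},\qquad \tilde\varphi:=\varphi e^{-nw_0},
\]
which extend to $C^{2n+1}(S^n)$ by hypothesis, the problem turns into
\[
\Pg U=\tilde K e^{n(U+c_U)}-\alpha\tilde\varphi\ \text{ on }S^n,\qquad c_U=-\tfrac1n\log\!\Big(\tfrac1{\alpha\gamma_n}\textstyle\int_{S^n}\tilde K e^{nU}\,dV_{g_0}\Big),
\]
where $dV_{g_0}=e^{nw_0}\,dx$ and $\int_{S^n}\tilde\varphi\,dV_{g_0}=\int_{\rn}\varphi\,dx=\gamma_n$ by \eqref{cond-phi}. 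The hypothesis $\alpha K>0$ gives $\alpha\tilde K>0$, so the logarithm is well defined and $\int_{S^n}(\tilde K e^{n(U+c_U)}-\alpha\tilde\varphi)\,dV_{g_0}=0$, which is the solvability condition for $\Pg$; moreover a finite limit of $v$ at infinity is the same as continuity of $U$ at the north pole $N=\pi^{-1}(\infty)$. I would then realise $U$ as a critical point of the mean--field functional
\[
\mathcal F(U)=\frac12\int_{S^n}U\,\Pg U\,dV_{g_0}+\alpha\int_{S^n}\tilde\varphi\,U\,dV_{g_0}-\frac{\alpha\gamma_n}{n}\log\!\Big(\frac1{\alpha\gamma_n}\int_{S^n}\tilde K e^{nU}\,dV_{g_0}\Big)
\]
on $H^{\frac n2}(S^n)$, whose Euler--Lagrange equation is precisely the displayed equation, with the self--consistent constant $c_U$ generated automatically by the logarithmic term; since $\mathcal F$ is invariant under $U\mapsto U+\mathrm{const}$, one may work on the subspace where $\bar U:=\frac1{|S^n|}\int_{S^n}U\,dV_{g_0}=0$.

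The decisive ingredient is the sharp Beckner inequality on $S^n$,
\[
\log\Big(\frac1{|S^n|}\int_{S^n}e^{n(U-\bar U)}\,dV_{g_0}\Big)\le \frac{n}{4\gamma_n}\int_{S^n}U\,\Pg U\,dV_{g_0},
\]
whose critical mass is the total $Q$--curvature $2\gamma_n$ of the round metric. For $\alpha\in(0,2)$ the prescribed mass $\alpha\gamma_n$ is strictly subcritical, and inserting this inequality into $\mathcal F$ makes the quadratic term dominate with a positive coefficient $\tfrac12-\tfrac\alpha4$, so $\mathcal F$ is coercive and bounded below; strict subcriticality also rules out concentration of $e^{nU}$, so a minimising sequence is bounded in $H^{\frac n2}$, its exponential is equi--integrable, and one extracts a minimiser. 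This is exactly where the assumption $\alpha<2$ enters, mirroring the bound $V<|S^n|$. For $\alpha<0$ the quadratic term is coercive for free, but $\tilde K=Ke^{-nw_0}$ now degenerates at $N$ (one has $e^{-nw_0}\sim 2^{-n}|x|^{2n}$ as $|x|\to\infty$), so the logarithmic term, which carries the positive weight $-\tfrac{\alpha\gamma_n}{n}$, is no longer bounded below a priori; I expect this to be the main obstacle. The hypothesis $|K|>\delta e^{-\delta|x|^p}$ with $0<p<n$ is tailored to it: transplanted to the sphere it yields $|\tilde K|\gtrsim e^{-c/d_{g_0}(\cdot,N)^p}$, a degeneration slow enough (because $p<n$) to dominate the Gaussian--type concentration of $e^{nU}$ permitted by the Moser--Trudinger inequality, so that $\int_{S^n}|\tilde K|e^{nU}\,dV_{g_0}$ stays bounded away from $0$ along minimising sequences and $\mathcal F$ remains coercive.

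Finally I would upgrade the minimiser by elliptic regularity for $\Pg$: Moser--Trudinger gives $e^{nU}\in L^p$ for every $p$, hence $\Pg U\in L^p$, and bootstrapping against the $C^{2n+1}$ coefficients $\tilde K,\tilde\varphi$ makes $U$ regular enough to be continuous at $N$, which furnishes the finite limit $\lim_{|x|\to\infty}v(x)=U(N)$. Pulling back, $w:=U\circ\pi$ is smooth on $\rn$ (there $K,\varphi\in C^\infty$ and $\Pg$ is elliptic in the interior), lies in $L_{\frac n2}(\rn)$ because it is bounded, and solves \eqref{eq_1} in the sense of Definition \ref{sol}. The three points that demand genuine work are the no--concentration analysis at the threshold $\alpha<2$ in the positive case, the control of the degeneracy of $\tilde K$ at the pole via $p<n$ in the negative case, and the regularity theory for the nonlocal operator $\Pg$ in odd dimension.
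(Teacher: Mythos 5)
Your proposal is correct and follows essentially the same route as the paper: transplanting to $S^n$ via the conformal covariance of $\Pg$ (Branson's identity), minimizing the same mean-field functional on $H^{\frac n2}(S^n)$ with Beckner's inequality giving the coefficient $\frac{2-\alpha}{4}$ for $0<\alpha<2$, extracting a minimizer by the compactness of $v\mapsto e^v$, bootstrapping regularity of the minimizer, and pulling back. The one point where the paper is simpler than you anticipate: for $\alpha<0$ no concentration analysis is needed at all --- Jensen's inequality bounds the logarithmic term below uniformly on the mean-zero subspace, the hypothesis $|K|>\delta e^{-\delta|x|^p}$ with $p<n$ serving exactly to make $\log\left(|\tilde{K}|e^{-nw_0\circ\pi}\right)$ integrable on $S^n$.
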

Now the proof of Theorem \ref{thm-1} and Theorem \ref{thm-2} follows at once by taking $$u:=-P+\alpha u_0+w+c_w,$$ where 
$u_0\in C^{\infty}(\rn)$ is given by Lemma \ref{u_0} with $k=2n+3$, $w$ is the solution in Theorem \ref{thm-cv} with  $\varphi=(-\Delta)^{\frac{n}{2}}u_0$ which satisfies \eqref{cond-phi}
thanks to Lemma \ref{u_0bis}, and $K:=\sign(\alpha)(n-1)!e^{-nP+n\alpha u_0}$. Notice that $Ke^{-nw_0}$ can be extend smoothly on $S^n$ via the stereographic projection
$\pi$ where as $\varphi e^{-nw_0}$ can be extend as a $C^{2n+1}$ function. 

\begin{lem}\label{u_0} For every positive integer $k$ there exists a $u_0\in C^{\infty}(\rn)$ such that
\begin{align}
 u_0(x)=\log\frac{1}{|x|}\quad \text{for }\,|x|\geq 1\text{ and }\quad|D^{\alpha}(-\Delta)^{\frac{n}{2}}u_0(x)|\leq\frac{C}{|x|^{2n+k+|\alpha|}}\quad \text{for }x\ne 0,\label{condition}
\end{align}
for any multi-index $\alpha\in \mathbb{N}^n$.
\end{lem}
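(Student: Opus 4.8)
The plan is to build $u_0$ by cutting off the fundamental solution of $(-\Delta)^{\frac n2}$ and then correcting it inside the unit ball by a compactly supported bump so as to kill enough moments. I would start from the well-known fact that, since $n$ is odd, $-\log|x|$ is a constant multiple of the fundamental solution of $(-\Delta)^{\frac n2}$ on $\rn$; more precisely there is $\tilde c_n\in\mathbb{R}$ with
$$(-\Delta)^{\frac n2}(-\log|x|)=\tilde c_n\,\delta_0 \quad\text{in }\mathcal{S}'(\rn),$$
so that $-\log|x|$ is $(-\Delta)^{\frac n2}$-harmonic away from the origin. Fix a radial cutoff $\psi\in C^\infty(\rn)$ with $\psi\equiv 0$ near $0$ and $\psi\equiv 1$ for $|x|\ge 1$, and look for $u_0$ of the form $u_0=-\psi\log|x|+\phi_0$ with $\phi_0\in C_c^\infty(B_1)$ to be chosen. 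Any such $u_0$ is smooth on $\rn$ and equals $\log\frac1{|x|}$ for $|x|\ge 1$, so the first requirement in \eqref{condition} holds automatically; it remains to choose $\phi_0$ so that $(-\Delta)^{\frac n2}u_0$ decays at the prescribed rate.

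Writing $u_0=-\log|x|+v$ with $v:=(1-\psi)\log|x|+\phi_0\in L^1(\rn)$ supported in $\overline{B_1}$, the displayed identity gives, for every $x\ne 0$,
$$(-\Delta)^{\frac n2}u_0(x)=(-\Delta)^{\frac n2}v(x)=(-\Delta)^{\frac 12}\big((-\Delta)^m v\big)(x),\qquad m:=\tfrac{n-1}{2}\in\mathbb{N},$$
where I have used that the integer power $(-\Delta)^m$ is local, so $\tilde v:=(-\Delta)^m v$ is again supported in $\overline{B_1}$ (and lies in $L^1$, its only singularity being the integrable $|x|^{-(n-1)}$ coming from $(-\Delta)^m\log|x|$). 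For $|x|>1$ the function $\tilde v$ vanishes, so the singular-integral representation of $(-\Delta)^{\frac12}$ collapses to
$$D^\alpha(-\Delta)^{\frac 12}\tilde v(x)=-c_n\int_{B_1}\tilde v(y)\,D_x^\alpha\frac{1}{|x-y|^{n+1}}\,dy,\qquad |x|>1.$$

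The decay is then produced by killing moments. Expanding $D_x^\alpha|x-y|^{-(n+1)}$ in $y$ to order $L:=n+k-2$, the leading terms integrate to zero provided $\int_{B_1}\tilde v(y)\,y^\beta\,dy=0$ for all $|\beta|\le L$, and the Taylor remainder then yields $|D^\alpha(-\Delta)^{\frac12}\tilde v(x)|\le C|x|^{-(n+2+L+|\alpha|)}=C|x|^{-(2n+k+|\alpha|)}$ for $|x|>1$, which is exactly \eqref{condition} in the far field. The point is that these conditions on $\tilde v$ translate into far fewer conditions on $v$: since $\int_{B_1}\tilde v\,y^\beta=\langle v,(-\Delta)^m y^\beta\rangle$ and $(-\Delta)^m$ maps the polynomials of degree $\le L$ onto those of degree $\le L-2m=k-1$, the requirement is equivalent to
$$\int_{B_1}v(y)\,y^\gamma\,dy=0\qquad\text{for all }|\gamma|\le k-1.$$
These are finitely many linearly independent conditions (the monomials being independent), and since replacing $\phi_0$ by $\phi_0+\eta$ shifts $\int_{B_1}v\,y^\gamma$ by $\int_{B_1}\eta\,y^\gamma$, a standard argument produces an $\eta\in C_c^\infty(B_1)$ realizing any prescribed values; choosing it to cancel the moments of the baseline $(1-\psi)\log|x|$ fixes $\phi_0$.

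Finally, for $x$ in the bounded region $0<|x|\le 1$ I would invoke pseudolocality of the elliptic pseudodifferential operator $(-\Delta)^{\frac n2}$: since $u_0\in C^\infty(\rn)$, the function $(-\Delta)^{\frac n2}u_0$ is smooth on all of $\rn$, hence it and all its derivatives are bounded on $\overline{B_1}$, and there the bound $C|x|^{-(2n+k+|\alpha|)}\ge C$ absorbs this after enlarging $C$. The main obstacle, and the step I would write most carefully, is the far-field estimate together with the moment reduction: one must justify the singular-integral representation and the termwise Taylor cancellation for the nonlocal half-Laplacian acting on the merely $L^1$ density $\tilde v$, and verify that the reduction of the moment conditions from order $n+k-2$ on $\tilde v$ to order $k-1$ on $v$ is both correct and solvable.
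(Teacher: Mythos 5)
Your proposal is correct in its essentials, but it takes a genuinely different route from the paper's. The paper does not kill moments: it defines $v_k$ as a $k$-fold antiderivative of $\log\frac{1}{|x|}$ in the $x_1$-direction and sets $u_0=\frac{\partial^k}{\partial x_1^k}(\chi v_k)$, so that the correction $u_0-\log\frac{1}{|x|}=\frac{\partial^k}{\partial x_1^k}\bigl((\chi-1)v_k\bigr)$ is automatically an exact $k$-th derivative of a compactly supported $L^1$ function; the decay $|x|^{-(2n+k+|\alpha|)}$ then falls out by integration by parts, which moves the $k$ derivatives together with $(-\Delta)^{\frac{n-1}{2}}$ onto the kernel $|x-y|^{-(n+1)}$, each transferred derivative yielding one extra power of decay. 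Your construction instead fixes the profile $-\psi\log|x|$ and adds a bump $\phi_0$ solving finitely many linear moment equations; the decay comes from Taylor-expanding the kernel against the vanishing moments of $\tilde v=(-\Delta)^{\frac{n-1}{2}}v$, and your reduction of the order-$(n+k-2)$ conditions on $\tilde v$ to order-$(k-1)$ conditions on $v$ (via surjectivity of $\Delta^{\frac{n-1}{2}}$ from polynomials of degree at most $n+k-2$ onto those of degree at most $k-1$) is correct and is exactly what keeps the linear system small and solvable. What the paper's device buys is brevity: no linear system and no Taylor expansion, since the cancellation is automatic for an exact derivative of compactly supported data. What yours buys is a more classical and transparent mechanism (vanishing moments imply decay) that would also apply to corrections not presented as exact derivatives. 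Both arguments rest on the same two technical facts, which you rightly flag and which the paper likewise uses with little comment: the off-support representation, up to a dimensional constant, $(-\Delta)^{\frac12}w(x)=-c_n\int_{B_1}w(y)|x-y|^{-(n+1)}dy$ for compactly supported $L^1$ data, and the smoothness of $(-\Delta)^{\frac n2}u_0$ on compact sets to dispose of the bounded region. Two small repairs to your write-up: your Taylor-remainder bound is uniform only for, say, $|x|\ge 2$ (not all $|x|>1$, since the constant degenerates as $x$ approaches $\partial B_1$), so the region handled by smoothness should be $0<|x|\le 2$; and the appeal to pseudolocality is best made concrete by writing $(-\Delta)^{\frac n2}u_0=(-\Delta)^{\frac12}\bigl[(-\Delta)^{\frac{n-1}{2}}u_0\bigr]$, noting that $(-\Delta)^{\frac{n-1}{2}}u_0$ is smooth, equals a multiple of $|x|^{1-n}$ for $|x|\ge 1$, hence lies in $L_{\frac12}(\rn)$, and then using the standard regularity of $(-\Delta)^{\frac12}$ on such functions.
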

\begin{proof}
Inductively we define $$v_j(x)=\int_0^{x_1}v_{j-1}((t,\bar{x}))dt, \quad \text{for }x=(x_1,\bar{x})\in\mathbb{R}\times\mathbb{R}^{n-1},\,j=1,2,\dots,k,$$
where $$v_0(x)=\log\frac{1}{|x|}.$$
Let $\chi\in C^{\infty}(\rn)$ be such that 
 $$\chi(x)=\left\{\begin{array}{ll}
 0 & for \,|x|\leq \frac{1}{2}                   \\
 1 & for\, |x|\geq 1.
     \end{array}\right.$$ 
We claim that $u_0=\frac{\partial^k}{\partial x_1^k}(\chi v_k)$ satisfies \eqref{condition}. 
 It is easy to see that $u_0(x)=\log\frac{1}{|x|}$ for $|x|\geq 1$. By Lemma \ref{funda} $\frac{1}{\gamma_n}(-\Delta)^{\frac{n-1}{2}}\frac{\partial^k}{\partial x_1^k}v_k$ is a fundamental solution of 
 $(-\Delta)^{\frac{1}{2}}$ on $\rn$ and hence for $x\neq 0$, $(-\Delta)^{\frac{1}{2}}(-\Delta)^{\frac{n-1}{2}}\frac{\partial^k}{\partial x_1^k}v_k(x)=0$. For $|x|>2$ using integration by parts we compute  
 \begin{align}
  (-\Delta)^{\frac{n}{2}}u_0(x)&=(-\Delta)^{\frac{1}{2}}(-\Delta)^{\frac{n-1}{2}}\frac{\partial^k}{\partial x_1^k}(\chi v_k-v_k)(x)\notag\\
  &=C_n\int_{|y|<1}\frac{(-\Delta)^{\frac{n-1}{2}}\frac{\partial^k}{\partial y_1^k}(\chi v_k-v_k)(y)}{|x-y|^{n+1}}dy \notag\\
  &=C_n\int_{|y|<1}\left(\chi v_k-v_k\right)(y)\frac{\partial^k}{\partial y_1^k}(-\Delta)^{\frac{n-1}{2}}\left(\frac{1}{|x-y|^{n+1}}\right)dy,\notag
 \end{align}
 and
 $$D^{\alpha}(-\Delta)^{\frac{n}{2}}u_0(x)=C_n\int_{|y|<1}\left(\chi(y)v_k(y)-v_k(y)\right)D_x^{\alpha}\frac{\partial^k}{\partial y_1^k}(-\Delta)^{\frac{n-1}{2}}\left(\frac{1}{|x-y|^{n+1}}\right)dy.$$
Hence $$|D^{\alpha}(-\Delta)^{\frac{n}{2}}u_0(x)|\leq C\frac{\|v_k\|_{L^1(B_1)}}{|x|^{2n+k+|\alpha|}}.$$
\end{proof}

\begin{lem}\label{u_0bis}
Let $u_0\in C^{\infty}(\rn)$ be as given by Lemma \ref{u_0} for a given $k\in\mathbb{N}$. Then $$\int_{\rn}(-\Delta)^{\frac{n}{2}}u_0(x)dx=\gamma_n.$$
\end{lem}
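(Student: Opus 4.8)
The plan is to compute $I:=\int_{\rn}(-\Delta)^{\frac n2}u_0\,dx$ by pairing $(-\Delta)^{\frac n2}u_0$ against the constant function $1$, regularized through a family of cutoffs. First I would observe that by the decay estimate \eqref{condition} of Lemma \ref{u_0} (taken with $\alpha=0$) one has $(-\Delta)^{\frac n2}u_0\in L^1(\rn)$, so that $I$ is a finite number and, for a fixed $\eta\in C^\infty_c(\rn)$ with $\eta\equiv 1$ on $B_1$ and $\supp\eta\subset B_2$, setting $\eta_R(x):=\eta(x/R)$, dominated convergence gives $I=\lim_{R\to\infty}\int_{\rn}(-\Delta)^{\frac n2}u_0\,\eta_R\,dx$. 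Since $u_0\in L_{\frac n2}(\rn)$ (as $u_0\sim-\log|x|$ at infinity) and $\eta_R\in\s(\rn)$, Definition \ref{sol} together with Proposition \ref{ests} legitimizes the pairing and yields $\int_{\rn}(-\Delta)^{\frac n2}u_0\,\eta_R\,dx=\int_{\rn}u_0\,(-\Delta)^{\frac n2}\eta_R\,dx$.

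Next I would exploit scaling. Writing $\psi:=(-\Delta)^{\frac n2}\eta$, one checks on the Fourier side that $(-\Delta)^{\frac n2}\eta_R(x)=R^{-n}\psi(x/R)$, so the substitution $x=Ry$ gives $\int_{\rn}u_0\,(-\Delta)^{\frac n2}\eta_R\,dx=\int_{\rn}u_0(Ry)\,\psi(y)\,dy$. The structural fact that makes the argument work is $\int_{\rn}\psi\,dy=\widehat{(-\Delta)^{\frac n2}\eta}(0)=|\xi|^n\hat\eta(\xi)\big|_{\xi=0}=0$. Using $u_0(Ry)=-\log R-\log|y|$ for $|y|\ge 1/R$ (by Lemma \ref{u_0}), the boundedness of $u_0$ near the origin, and the rapid decay of $\psi$, I would split the integral at $|y|=1/R$: the divergent term $-\log R\int_{\rn}\psi\,dy$ vanishes identically by the zero-moment property, the contribution of $\{|y|<1/R\}$ is $O(R^{-n})\to 0$, and the remainder converges to $I=-\int_{\rn}\log|y|\,(-\Delta)^{\frac n2}\eta(y)\,dy$.

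Finally, a second duality step identifies the constant. Rewriting $-\int_{\rn}\log|y|\,(-\Delta)^{\frac n2}\eta\,dy=\langle(-\Delta)^{\frac n2}\log\tfrac{1}{|y|},\eta\rangle$ and invoking Lemma \ref{funda}, namely that $\tfrac{1}{\gamma_n}\log\tfrac{1}{|x|}$ is a fundamental solution of $(-\Delta)^{\frac n2}$ on $\rn$, so that $(-\Delta)^{\frac n2}\log\tfrac{1}{|x|}=\gamma_n\delta_0$, I obtain $I=\gamma_n\eta(0)=\gamma_n$, which is the claim. (As a sanity check, for $n=2$ the fundamental solution is the classical $\tfrac{1}{2\pi}\log\tfrac{1}{|x|}$, consistent with $\gamma_2=\tfrac{1!}{2}|S^2|=2\pi$.)

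I expect the main obstacle to be the careful justification of the two duality steps for the non-Schwartz function $u_0$, which grows logarithmically: one must confirm the pairing $\int u_0\,(-\Delta)^{\frac n2}\eta_R$ is admissible (this is exactly the integrability supplied by Proposition \ref{ests}) and, most importantly, that the $\log R$ divergence is cancelled by the vanishing zeroth moment of $(-\Delta)^{\frac n2}\eta$. The precise normalization $\gamma_n=\frac{(n-1)!}{2}|S^n|$ enters only through Lemma \ref{funda}, so once the fundamental-solution constant is in hand the computation closes immediately.
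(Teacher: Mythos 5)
Your proof is correct and takes essentially the same route as the paper: regularize with dilated cutoffs, move $(-\Delta)^{\frac n2}$ onto the cutoff by duality, and conclude from the fact (derived from Lemma \ref{funda}) that $\frac{1}{\gamma_n}\log\frac{1}{|x|}$ is a fundamental solution of $(-\Delta)^{\frac n2}$. The only difference is bookkeeping: the paper splits $u_0=\log\frac{1}{|x|}+\bigl(u_0-\log\frac{1}{|x|}\bigr)$ and applies the fundamental-solution identity directly to $\eta_k$, so no logarithmic divergence ever appears and the compactly supported remainder vanishes by the locally uniform convergence $(-\Delta)^{\frac n2}\eta_k\to 0$, whereas your rescaling to unit scale produces the $\log R$ term and then cancels it with the zero-moment property $\int_{\rn}(-\Delta)^{\frac n2}\eta\, dx=0$ --- the same cancellation in different clothes.
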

\begin{proof}
 Let $\eta\in C^{\infty}(\rn)$ be such that $$\eta(x)=\left\{\begin{array}{ll}
                                                             1\quad \text{if } |x|\leq 1\\
                                                             0\quad\text{if } |x|\geq 2.
                                                             \end{array}
  \right.$$
  We set $\eta_k(x)=\eta(\frac xk)$. Then noticing that $(-\Delta)^{\frac{n}{2}}u_0\in L^1(\rn)$ one has
  \begin{align}
   \int_{\rn}(-\Delta)^{\frac{n}{2}}u_0(x)dx &=\lim_{k\to\infty}\int_{\rn}(-\Delta)^{\frac{n}{2}}u_0(x)\eta_k(x)dx \notag\\
   &= \lim_{k\to\infty}\int_{B_1}\left(u_0(x)-\log\frac{1}{|x|}\right)(-\Delta)^{\frac{n}{2}}\eta_k(x)dx  +\gamma_n     \notag\\
   &=\gamma_n, \notag
  \end{align}
where in the second equality we used the fact that $\frac{1}{\gamma_n}\log\frac{1}{|x|}$ is a fundamental solution of $(-\Delta)^{\frac{n}{2}}$ and
 the third equality follows from the locally uniform convergence of  $(-\Delta)^{\frac{n}{2}}\eta_k\to 0$.
 \end{proof}

 It remains to prove Theorem \ref{thm-cv}. In order to do that we recall the definition of $H^n(S^n)$ . 
 \begin{defn}
 Let $n\geq 3$ be an odd integer.
  Let $\{Y^m_l\in C^{\infty}(S^n): 1\leq m\leq N_l,\, l=0,1,2,\dots\}$ 
 be a orthonormal basis of $L^2(S^n)$ where $Y^m_l$ is an eigenfunction of the Laplace-Beltrami operator $-\Delta_{g_0}$ ($g_0$ denotes the round metric on $S^n$) corresponding to the eigenvalue $\lambda_l=l(l+n-2)$ and $N_l$ is the multiplicity of  
 $\lambda_l$. The space $H^n(S^n)$ is defined by 
 $$H^n(S^n)=\left\{u\in L^2(S^n): \|u\|_{\dot H^n(S^n)}<\infty\right\},$$ where
 for any 
 $$u=\sum_{l=0}^{\infty}\sum_{m=1}^{N_l}u^m_lY^m_l$$
 we set
 $$\|u\|_{\dot H^n(S^n)}^2:=\sum_{l=0}^{\infty}\sum_{m=1}^{N_l}\left(\lambda_l+\left(\frac{n-1}{2}\right)^2\right)\prod_{k=0}^{\frac{n-3}{2}}
 (\lambda_l+k(n-k-1))^2(u^m_l)^2.$$
 \end{defn}
Notice that the norm $\|u\|_{\dot H^n(S^n)}^2$ is equivalent to the simpler norm $\|u\|^2:= \sum_{l=0}^\infty\sum_{m=1}^{N_l}\lambda_l^n (u_l^m)^2$, but  has the advantage of taking the form
$$\|u\|_{\dot H^n(S^n)}=\|P_{g_0}^n u\|_{L^2(S^n)},$$
where for $n$ odd the Paneitz operator $\Pg$ can be defined on $H^n(S^n)$ by 
 $$\Pg u=\sum_{l=0}^{\infty}\sum_{m=1}^{N_l}\left(\lambda_l+\left(\frac{n-1}{2}\right)^2\right)^{\frac 12}\prod_{k=0}^{\frac{n-3}{2}}
 (\lambda_l+k(n-k-1))u^m_lY^m_l.$$
Since the operator $\Pg$ is positive we can define its square root, namely 
$$(\Pg)^{\frac 12}u:=\sum_{l=0}^{\infty}\sum_{m=1}^{N_l}\left(\lambda_l+\left(\frac{n-1}{2}\right)^2\right)^{\frac 14}\prod_{k=0}^{\frac{n-3}{2}}
 (\lambda_l+k(n-k-1))^{\frac 12}u^m_lY^m_l, \quad u\in H^\frac n2(S^n),$$
 where the space $H^\frac n2(S^n)$ is defined by $$H^\frac n2(S^n):=\left\{u\in L^2(S^n):\sum_{l=0}^{\infty}\sum_{m=1}^{N_l}\left(\lambda_l+\left(\frac{n-1}{2}\right)^2\right)^\frac 12\prod_{k=0}^{\frac{n-3}{2}}
 (\lambda_l+k(n-k-1))(u^m_l)^2<\infty\right\},$$ endowed with the norm 
 \begin{align}\|u\|^2_{H^\frac n2(S^n)}&:=\|u\|^2_{L^2(S^n)}+\|u\|^2_{\dot{H}^\frac n2(S^n)}\notag\\
 &:=\|u\|_{L^2(S^n)}^2 + \|(\Pg)^{\frac 12}u \|_{L^2(S^n)}^2 \notag
 \end{align}
 
 \begin{defn}\label{def-weak}
  Let $f\in H^{-\frac n2}(S^n)$ the dual of $H^{\frac n2}(S^n)$. A function $u\in H^{\frac n2}(S^n)$ is said to be a weak solution of $$\Pg u=f,$$ if 
  \begin{align}\label{weakd}
   \int_{S^n}(\Pg)^\frac12u(\Pg)^\frac12\varphi dV_0=\langle f,\varphi \rangle, \quad \text{for every }\varphi\in H^{\frac n2}(S^n).
  \end{align}

 \end{defn}


 The following estimate of Beckner is crucial in the proof of Theorem \ref{thm-cv}.
\begin{thm}[\cite{Bec}]\label{bec}
  For every $u\in H^{\frac{n}{2}}(S^n)$ one has 
\begin{align} 
 \log\left(\frac{1}{|S^n|}\int_{S^n}e^{u-\overline{u}}dV_0\right)\leq \frac{1}{2|S^n|n!}\int_{S^n}|(\Pg)^{\frac{1}{2}} u|^2dV_0,
\quad \overline{u}:=\frac{1}{|S^n|}\int_{S^n}udV_0.\notag
\end{align}
\end{thm}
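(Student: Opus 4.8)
The plan is to deduce Beckner's inequality as the critical endpoint limit of the family of sharp fractional Sobolev (equivalently Hardy--Littlewood--Sobolev) inequalities on $S^n$. The starting point is the conformally covariant intertwining operator $A_{2\gamma}$ of order $2\gamma$, $0<\gamma<\frac n2$, acting on the spherical harmonic $Y^m_l$ by the eigenvalue $\frac{\Gamma(l+\frac n2+\gamma)}{\Gamma(l+\frac n2-\gamma)}$. Lieb's sharp HLS inequality, transplanted to the sphere via stereographic projection, is equivalent to the sharp Sobolev inequality
$$\Big(\frac{1}{|S^n|}\int_{S^n}|F|^{p}dV_0\Big)^{2/p}\le \frac{1}{c_\gamma|S^n|}\int_{S^n}F\,A_{2\gamma}F\,dV_0,\qquad p=\frac{2n}{n-2\gamma},$$
where $c_\gamma:=\frac{\Gamma(\frac n2+\gamma)}{\Gamma(\frac n2-\gamma)}$ is the bottom ($l=0$) eigenvalue and equality holds for constants; testing with $F\equiv1$ forces exactly this normalization of the constant. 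The first step is therefore to record this inequality, citing Lieb/Beckner for the sharp constant and for the fact that constants are extremal.

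The second step is to identify $\Pg$ with the endpoint operator $A_n$. Using that $\lambda_l=l(l+n-1)$ one checks $\lambda_l+\big(\tfrac{n-1}{2}\big)^2=\big(l+\tfrac{n-1}{2}\big)^2$ and $\lambda_l+k(n-k-1)=(l+k)(l+n-1-k)$, so the eigenvalue of $\Pg$ telescopes to $\prod_{j=0}^{n-1}(l+j)=\frac{\Gamma(l+n)}{\Gamma(l)}$, which is precisely the $\gamma\uparrow\frac n2$ limit of the $A_{2\gamma}$ eigenvalues. In particular $\int_{S^n}|(\Pg)^{\frac12}u|^2\,dV_0=\int_{S^n}u\,A_n u\,dV_0$, so it suffices to prove the stated bound with $\int u A_n u$ on the right.

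The heart of the argument is the limit $\gamma=\frac n2-\epsilon\downarrow\frac n2$. The decisive point is the degeneration of the bottom eigenvalue: since $\Gamma(\frac n2-\gamma)=\Gamma(\epsilon)\sim\epsilon^{-1}$ has a pole while $\Gamma(\frac n2+\gamma)\to\Gamma(n)=(n-1)!$, one has $c_\gamma\sim (n-1)!\,\epsilon\to0$, whereas for every $l\ge1$ the eigenvalue tends to the finite value $\frac{\Gamma(l+n)}{\Gamma(l)}$. I would assume without loss of generality $\overline{u}=0$ and $u$ smooth (extending to $H^{\frac n2}(S^n)$ by density at the end), and insert $F=e^{\epsilon u/n}=1+\frac\epsilon n u+\frac{\epsilon^2}{2n^2}u^2+O(\epsilon^3)$, so that $F^p=e^{u}$. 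Expanding $\int F A_{2\gamma}F$, the order-$\epsilon$ term vanishes by $\overline u=0$, the $l=0$ contributions carry the vanishing factor $c_\gamma$, and the remaining part converges to $\int u A_n u$, giving
$$\frac{1}{c_\gamma|S^n|}\int_{S^n}F A_{2\gamma}F\,dV_0=1+\frac{\epsilon}{n^2(n-1)!\,|S^n|}\int_{S^n}u\,A_n u\,dV_0+o(\epsilon).$$
Taking logarithms in the sharp Sobolev inequality, the left-hand side is $\frac{2\epsilon}{n}\log\big(\frac{1}{|S^n|}\int e^{u}\big)$; dividing by $\epsilon$, letting $\epsilon\to0$, and multiplying by $\frac n2$ yields $\log\big(\frac{1}{|S^n|}\int_{S^n}e^{u-\overline u}\big)\le\frac{1}{2n!\,|S^n|}\int_{S^n}u A_n u$, which is the claim.

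The main obstacle is to make this limit rigorous rather than formal: one must justify interchanging $\epsilon\to0$ with the spectral sum $\int u A_{2\gamma}u=\sum_{l\ge1}\frac{\Gamma(l+\frac n2+\gamma)}{\Gamma(l+\frac n2-\gamma)}(u^m_l)^2$ (monotone or dominated convergence in $l$, using that these eigenvalues increase to the $A_n$ values) and control the $O(\epsilon^3)$ Taylor remainder of $F=e^{\epsilon u/n}$ uniformly; both are clean for smooth $u$ with finitely many nonzero coefficients, after which density closes the general case. A secondary point is that the whole scheme rests on Lieb's sharp HLS inequality with constants as extremizers. An alternative, more self-contained route would instead prove $\inf J=0$ for the conformally invariant functional $J(u)=\frac{1}{2n!|S^n|}\int u A_n u-\log\big(\frac1{|S^n|}\int e^{u-\overline u}\big)$ directly, using the M\"obius action of the conformal group to recenter a minimizing sequence together with the classification of extremals; there the concentration--compactness analysis at the critical exponent is the genuinely delicate step.
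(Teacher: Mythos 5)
The paper does not prove this statement at all: it is quoted verbatim from Beckner \cite{Bec}, so there is no internal proof to compare against. Your proposal is correct in outline and is in fact a faithful reconstruction of Beckner's original derivation: the sharp fractional Sobolev/HLS inequalities on $S^n$ for the intertwining operators $A_{2\gamma}$ (with constants as extremizers, by Lieb), differentiated at the endpoint $\gamma=\frac n2$; your telescoping identity giving the eigenvalue $\frac{\Gamma(l+n)}{\Gamma(l)}$ for $\Pg$ and the constant bookkeeping ($c_\gamma=\Gamma(n-\epsilon)/\Gamma(\epsilon)\sim(n-1)!\,\epsilon$, the $l=0$ block cancelling against the $1/c_\gamma$ prefactor, and the final factor $\frac{n}{2}\cdot\frac{1}{n^2(n-1)!}=\frac{1}{2\,n!}$) all check out. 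Two small remarks: your factorizations $\lambda_l+\left(\frac{n-1}{2}\right)^2=\left(l+\frac{n-1}{2}\right)^2$ and $\lambda_l+k(n-k-1)=(l+k)(l+n-1-k)$ require the correct spectrum $\lambda_l=l(l+n-1)$ of $-\Delta_{g_0}$ on $S^n$, whereas the paper writes $\lambda_l=l(l+n-2)$ (evidently a typo, which your computation silently corrects); and the technical points you flag yourself --- monotone convergence of the spectral sums as $\epsilon\downarrow0$ (which does hold, since $\frac{d}{d\epsilon}\log\frac{\Gamma(l+n-\epsilon)}{\Gamma(l+\epsilon)}=-\psi(l+n-\epsilon)-\psi(l+\epsilon)<0$ for $l\ge1$, $n\ge3$), uniform control of the Taylor remainder for $u$ with finite spherical-harmonic expansion, and the closing density argument using continuity of $u\mapsto\int_{S^n}e^u\,dV_0$ on $H^{\frac n2}(S^n)$ --- are genuine but routine to complete, so there is no gap of substance.
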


\medskip

\noindent\emph{Proof of Theorem \ref{thm-cv}.}
 Let  $\tilde{K}=K\circ\pi$, $\varphi_1=\varphi e^{-nw_0}$ and $\tilde{\varphi_1}=\varphi_1\circ\pi$. Define the functional $J$ on 
$H^{\frac{n}{2}}(S^n)$ by
$$J(w):=\int_{S^n}\left(\frac{1}{2}|(\Pg)^{\frac{1}{2}}w|^2+\alpha\tilde{\varphi_1}w\right)dV_0-
                    \frac{\alpha\gamma_n}{n}\log\left(\int_{S^n}|\tilde{K}|e^{nw}e^{-nw_0\circ\pi}dV_0\right).$$ 
Using Theorem \ref{bec} we bound
\begin{align}\label{beck}
 \log\left(\int_{S^n}|\tilde{K}|e^{nw}e^{-nw_0\circ\pi}dV_0\right) &
 = \left(\log\left(\frac{1}{|S^n|}\int_{S^n}e^{nw-n\overline{w}}
 |\tilde{K}|e^{-nw_0\circ\pi}dV_0\right) + n\overline{w}+C\right) \notag \\
 & \leq \log\left(\frac{1}{|S^n|}\int_{S^n}e^{nw-n\overline{w}}dV_0\right)+\log\left(\|\tilde{K}e^{-nw_0\circ\pi}\|_{L^{\infty}}\right)
             +n\overline{w}+C \notag\\
  &\leq \frac{n^2}{2|S^n|n!}\int_{S^n}|(\Pg)^{\frac{1}{2}}w|^2dV_0 +n\overline{w}+C.         
\end{align}
Since for any $c\in \mathbb{R}$  $J(w+c)=J(c)$ we can assume $\overline{w}=0$. Then from (\ref{beck}) we have
\begin{align}
 J(w) \geq \min\bigg\{\frac {1}{2}, \bigg(\underbrace{\frac{1}{2}-\frac{\alpha\gamma_n}{n}\frac{n^2}{2|S^n|n!}}_{=(2-\alpha)/4}\bigg)\bigg\}\|w\|_{\dot{H}^{\frac{n}{2}}}^2-\varepsilon\|w\|_{\dot{H}^{\frac{n}{2}}}^2-
              \frac{1}{\varepsilon}\|\tilde{\varphi}_1\|_{L^2}^2-C, \notag
\end{align}
where $0<\varepsilon<\frac 12$  is sufficiently small such that $\frac{2-\alpha}{4}-\varepsilon>0$ and for $\alpha<0$ using 
$|K|>\delta e^{-\delta|x|^p}$ one has
$$\log\left(\int_{S^n}|\tilde{K}|e^{nw}e^{-nw_0\circ\pi}dV_0\right) \geq \frac{1}{|S^n|}\int_{S^n}\log\left(|\tilde{K}|e^{-nw_0\circ\pi}\right)dV_0
+n\overline{w}+\log|S^n|\geq-C.$$ 
Thus a minimizing sequence $\{w_k\}$
of $J$ with $\overline{w}_k=0$ is bounded in ${\dot{H}^{\frac{n}{2}}({S^n})}$.  With the help of Poincar\'e inequality
$$\|w-\overline{w}\|_{L^2(S^n)}\leq \|(\Pg)^{\frac 12}w\|_{L^2(S^n)},\quad\text{for every }w\in H^{\frac n2}(S^n),$$ 
which easily follows from the definition of $\|(\Pg)^{\frac 12}w\|_{L^2(S^n)}$,  we conclude that the sequence $\{w_k\}$ is bounded in ${H^{\frac{n}{2}}({S^n})}$.
Then up to a subsequence $w_k$ converges weakly to $u$ for some $u\in {H^{\frac{n}{2}}({S^n})}$. 
From the compactness of the map $v\mapsto e^v$ from ${H^{\frac{n}{2}}({S^n})}$ to $L^P(S^n)$ for any $p\in [1,\infty)$ (for a simple proof see Proposition 7 in \cite{TALJ} which holds
in higher dimension as well) we have (up to a subsequence)
$$\lim_{k\to\infty}\log\left(\int_{S^n}|\tilde{K}|e^{nw_k}e^{-nw_0\circ\pi}dV_0\right)=\log\left(\int_{S^n}|\tilde{K}|e^{nu}e^{-nw_0\circ\pi}dV_0\right).$$
Moreover from the weak convergence of $w_k$ to $u$ we have
$$\lim_{k\to\infty}\int_{S^n}\tilde{\varphi}_1w_kdV_0=\int_{S^n}\tilde{\varphi}_1udV_0 \quad\text{and} \quad \|u\|_{H^{\frac n2}(S^n)}\leq\liminf_{k\to\infty}\|w_k\|_{H^{\frac n2}(S^n)},$$
and from the compact embedding $H^{\frac n2}(S^n)\hookrightarrow L^2(S^n)$ we get $$\lim_{k\to\infty}\|w_k\|_{L^2(S^n)}=\|u\|_{L^2(S^n)}.$$
Thus $\|(\Pg)^{\frac 12}u\|_{L^2(S^n)}\leq \liminf_{k\to\infty}\|(\Pg)^{\frac 12}w_k\|_{L^2(S^n)}$ which implies that $u$ is a minimizer of $J$ and hence
  $u$ is a weak solution of (in the sense of Definition \ref{def-weak})
$$\Pg u+\alpha\tilde{\varphi_1}=\frac{\alpha\gamma_n}{\int_{S^n}\tilde{K}e^{nu}e^{-nw_0}dV_0}\tilde{K}e^{-nw_0\circ\pi}e^{nu}
=:C_0\tilde{K}e^{-nw_0\circ\pi}e^{nu}.$$ Since $\tilde{\varphi_1}\in C^{2n+1}(S^n)$ and $\tilde{K}e^{-nw_0\circ\pi}\in C^{\infty}(S^n)$ we have 
$$\Pg u=C_0\tilde{K}e^{-nw_0\circ\pi}e^{nu}-\alpha\tilde{\varphi_1}\in L^2(S^n),$$ and by Lemma \ref{Hn} below $u\in H^n(S^n)$  
and a repeated use of Lemma \ref{regularity} gives $u\in C^{2n+1}(S^n)$.

We set  $w:=u\circ\pi^{-1}$ and $w_k:=u_k\circ\pi^{-1}$ where $u_k\in C^{\infty}(S^n)$ be such that $u_k \xrightarrow {C^{2n+1}(S^n)}u$. 
It is easy to see that $(-\Delta)^{\frac{n}{2}}w _k\xrightarrow{C^0(\rn)} (-\Delta)^{\frac{n}{2}}w$ and $\Pg u_k\xrightarrow{C^0(S^n)}\Pg u$ which easily 
follows from $$\Pg u_k\xrightarrow{H^{n+1}(S^n)}\Pg u \text{ and } H^{n+1}(S^n)\hookrightarrow C^0(S^n).$$

 Now using the following identity of T. Branson (see \cite{TB})
 $$(-\Delta)^{\frac{n}{2}}(v\circ\pi^{-1})=e^{nw_0}(\Pg v)\circ\pi^{-1}\quad \text{for every }v\in C^{\infty}(S^n),$$ we get
\begin{align}
 (-\Delta)^{\frac{n}{2}}w =(-\Delta)^{\frac{n}{2}}(u\circ\pi^{-1}) 
   &=e^{nw_0}\left(C_0\tilde{K}e^{-nw_0\circ\pi}e^{nu}-\alpha\tilde{\varphi_1}\right)\circ\pi^{-1} \notag\\
  & 
  =C_0Ke^{nw}-\alpha\varphi=Ke^{n(w+c_w)}-\alpha\varphi. \notag 
\end{align}
Since $(-\Delta)^{\frac{n}{2}}w\in L_{\frac 12}(\rn)\cap C^{2n+1}(\rn)$ we have 
$$(-\Delta)^{\frac{n+1}{2}}w=(-\Delta)^\frac12(-\Delta)^{\frac{n}{2}}w\in C^{2n}(\rn),$$ and by bootstrap argument we conclude that $w\in C^{\infty}(\rn)$.

\hfill $\square$

\begin{lem}\label{Hn}
 Let $f\in L^2(S^n)$. Let $u\in H^{\frac n2}(S^n)$ be a weak solution (in the sense of Definition \ref{def-weak}) of
 $$\Pg u=f \quad\text{on } S^n.$$ Then $u\in H^n(S^n)$.
\end{lem}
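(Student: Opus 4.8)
The plan is to work entirely in the eigenbasis $\{Y^m_l\}$ and to reduce the whole statement to a one-line computation with Fourier coefficients. The first observation is that, by the very definitions in the excerpt, $\Pg$, $(\Pg)^{\frac12}$, and both homogeneous norms are Fourier multipliers against the single positive sequence
$$\mu_l:=\left(\lambda_l+\left(\tfrac{n-1}{2}\right)^2\right)^{\frac12}\prod_{k=0}^{\frac{n-3}{2}}(\lambda_l+k(n-k-1)),$$
in the sense that $(\Pg)^{\frac12}$ has symbol $\mu_l^{\frac12}$, $\Pg$ has symbol $\mu_l$, and for any $v=\sum_{l,m}v^m_lY^m_l$ one has $\|v\|_{\dot H^{n/2}(S^n)}^2=\sum_{l,m}\mu_l(v^m_l)^2$ while $\|v\|_{\dot H^n(S^n)}^2=\sum_{l,m}\mu_l^2(v^m_l)^2=\|\Pg v\|_{L^2(S^n)}^2$. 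In particular $v\in H^n(S^n)$ precisely when $v\in L^2(S^n)$ and $\Pg v\in L^2(S^n)$. Since $u\in H^{\frac n2}(S^n)\subset L^2(S^n)$ is given, the lemma reduces to showing $\sum_{l,m}\mu_l^2(u^m_l)^2<\infty$.

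Writing $u=\sum_{l,m}u^m_lY^m_l$ and $f=\sum_{l,m}f^m_lY^m_l$ (legitimate since $f\in L^2(S^n)$, so $\sum_{l,m}(f^m_l)^2<\infty$), I would next rewrite the weak formulation \eqref{weakd} in coordinates. By orthonormality of the $Y^m_l$, the left-hand side of \eqref{weakd} becomes $\sum_{l,m}\mu_l\,u^m_l\varphi^m_l$, a series that converges absolutely for $u,\varphi\in H^{\frac n2}(S^n)$ by Cauchy–Schwarz, while the duality pairing collapses to the $L^2$ inner product $\langle f,\varphi\rangle=\sum_{l,m}f^m_l\varphi^m_l$ because $f\in L^2(S^n)$ and $\varphi\in H^{\frac n2}(S^n)\subset L^2(S^n)$.

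The decisive step is then to test \eqref{weakd} against the individual eigenfunctions $\varphi=Y^m_l$. Each $Y^m_l$ is smooth, hence lies in $H^{\frac n2}(S^n)$ and is an admissible test function, and plugging it in collapses both series to a single term, yielding $\mu_l u^m_l=f^m_l$ for every $l$ and $m$. Since $\mu_l>0$ this gives $u^m_l=f^m_l/\mu_l$, and therefore
$$\|\Pg u\|_{L^2(S^n)}^2=\sum_{l,m}\mu_l^2(u^m_l)^2=\sum_{l,m}(f^m_l)^2=\|f\|_{L^2(S^n)}^2<\infty,$$
which by the reduction of the first paragraph yields $u\in H^n(S^n)$, in fact with $\|u\|_{\dot H^n(S^n)}=\|f\|_{L^2(S^n)}$.

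I do not anticipate a genuine obstacle: this is simply the statement that a diagonal Fourier multiplier of order $n$ carries the solution of $\Pg u=f$ one full order smoother than $f$, and the diagonalization does all the work. The only points deserving a word of care are bookkeeping: that testing \eqref{weakd} against every $\varphi\in H^{\frac n2}(S^n)$ may be specialized to the eigenfunctions $Y^m_l$ (for which the two series are just single terms, so no convergence issue arises), and that the pairing $\langle f,\cdot\rangle$ genuinely coincides with the $L^2$ inner product for $f\in L^2(S^n)$.
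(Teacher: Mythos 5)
Your proof is correct and follows exactly the paper's own argument: test the weak formulation \eqref{weakd} against the eigenfunctions $Y^m_l$ to obtain $\mu_l u^m_l = f^m_l$, then sum to conclude $\|\Pg u\|_{L^2(S^n)}^2 = \|f\|_{L^2(S^n)}^2 < \infty$, i.e.\ $u\in H^n(S^n)$. The only difference is that you spell out the bookkeeping (orthonormality, convergence of the series, identification of the duality pairing with the $L^2$ inner product) that the paper leaves implicit.
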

\begin{proof}
Let $$u=\sum_{l=0}^{\infty}\sum_{m=1}^{N_l}u^m_lY^m_l\quad  \text{ and } f=\sum_{l=0}^{\infty}\sum_{m=1}^{N_l}f^m_lY^m_l.$$ 
Taking the test function $\varphi=Y^m_l$ in \eqref{weakd} we get 
$$f^m_l=\int_{S^n}(\Pg)^\frac12u(\Pg)^\frac12\varphi dV_0=\left(\lambda_l+\left(\frac{n-1}{2}\right)^2\right)^{\frac 12}\prod_{k=0}^{\frac{n-3}{2}}
 (\lambda_l+k(n-k-1))u^m_l.$$
Hence 
\begin{align}
\|\Pg u\|_{L^2(S^n)}=\sum_{l=0}^{\infty}\sum_{m=1}^{N_l}\left(\lambda_l+\left(\frac{n-1}{2}\right)^2\right)\prod_{k=0}^{\frac{n-3}{2}}
 (\lambda_l+k(n-k-1))^2(u^m_l)^2=\sum_{l=0}^{\infty}\sum_{m=1}^{N_l}(f^m_l)^2<\infty,\notag
  \end{align}
and we conclude the proof.
\end{proof}

\begin{lem}\label{regularity}
 Let $u\in H^s(S^n)$ and $f\in H^{s-n+t}(S^n)$ for some $s\geq n$ and $t\geq 0$. If $u$ solves 
 \begin{align}
 \Pg u=f \quad\text{on } S^n,\notag
  \end{align}
 then $u\in H^{s+t}(S^n)$. 
\end{lem}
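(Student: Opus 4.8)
The plan is to reduce everything to the spectral decomposition on $S^n$, since $\Pg$ is diagonal in the basis $\{Y^m_l\}$. I would first recall that, up to equivalence of norms, $\|u\|_{H^s(S^n)}^2=\sum_{l,m}(1+\lambda_l)^s(u^m_l)^2$, so that membership of $u$ in $H^{s+t}(S^n)$ is purely a statement about the decay of the coefficients $u^m_l$.

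Writing $u=\sum_{l,m}u^m_l Y^m_l$ and $f=\sum_{l,m}f^m_l Y^m_l$ and testing the equation against $\varphi=Y^m_l$ exactly as in the proof of Lemma \ref{Hn}, I obtain the diagonal relation
$$f^m_l=\sigma_l\,u^m_l,\qquad \sigma_l:=\left(\lambda_l+\left(\tfrac{n-1}{2}\right)^2\right)^{\frac12}\prod_{k=0}^{\frac{n-3}{2}}(\lambda_l+k(n-k-1)).$$
The heart of the matter is then the elementary two-sided estimate $\sigma_l\asymp(1+\lambda_l)^{n/2}$ for $l\geq 1$. Indeed, since $\lambda_l=l(l+n-2)\geq n-1$ for $l\geq 1$ and each constant $k(n-k-1)$ with $0\leq k\leq\frac{n-3}{2}$ is nonnegative and bounded in terms of $n$, every factor satisfies $\lambda_l\leq\lambda_l+k(n-k-1)\leq C_n\lambda_l$; as the product has $\frac{n-1}{2}$ factors and the remaining square-root factor is comparable to $\lambda_l^{1/2}$, one gets $c_n(1+\lambda_l)^{n/2}\leq\sigma_l\leq C_n(1+\lambda_l)^{n/2}$ for suitable $c_n,C_n>0$ and all $l\geq 1$. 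The only degenerate mode is $l=0$, where $\sigma_0=0$ (the constants are the kernel of $\Pg$); this forces $f$ to have zero mean and leaves $u^m_0$ unconstrained by the equation.

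Finally I would just sum. Splitting off the $l=0$ term, which is controlled by $(u^m_0)^2\leq\|u\|_{L^2(S^n)}^2\leq\|u\|_{H^s(S^n)}^2$ since $s\geq 0$, and using $u^m_l=f^m_l/\sigma_l$ together with the lower bound on $\sigma_l$ for $l\geq 1$, I estimate
$$(1+\lambda_l)^{s+t}(u^m_l)^2=(1+\lambda_l)^{s+t}\frac{(f^m_l)^2}{\sigma_l^2}\leq c_n^{-2}(1+\lambda_l)^{s-n+t}(f^m_l)^2.$$
Summing over $l\geq 1$ and $m$ bounds the tail by $c_n^{-2}\|f\|_{H^{s-n+t}(S^n)}^2$, which is finite by hypothesis; note that $s-n+t\geq 0$ because $s\geq n$ and $t\geq 0$, so $H^{s-n+t}(S^n)$ is a genuine space in the scale. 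Hence $\|u\|_{H^{s+t}(S^n)}^2\lesssim\|u\|_{H^s(S^n)}^2+\|f\|_{H^{s-n+t}(S^n)}^2<\infty$, giving $u\in H^{s+t}(S^n)$.

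No step here is genuinely hard, which is unsurprising since this is the elliptic-regularity engine of the bootstrap used in the proof of Theorem \ref{thm-cv}. The only points requiring care are the degenerate zeroth mode, handled separately above, and pinning down that the eigenvalues of $\Pg$ grow exactly like $(1+\lambda_l)^{n/2}$, which is precisely what makes $\Pg$ behave as an elliptic operator of order $n$ in this Sobolev scale and hence gain $n$ derivatives.
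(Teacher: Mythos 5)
Your proof is correct and takes essentially the same route as the paper: both diagonalize $\Pg$ in the spherical-harmonic basis, reduce the equation to the coefficient relation $f^m_l=\sigma_l u^m_l$, and conclude from the growth of the eigenvalues $\sigma_l\asymp(1+\lambda_l)^{n/2}$. The only cosmetic difference is that the paper first applies $(-\Delta_{g_0})^{\frac{s-n}{2}}$ to the equation and works with $h=(-\Delta_{g_0})^{\frac{s-n}{2}}f\in H^t(S^n)$, whereas you work directly with $f$ and spell out the eigenvalue asymptotics and the degenerate $l=0$ mode, which the paper leaves implicit.
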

\begin{proof}
 Let $$u=\sum_{l=0}^{\infty}\sum_{m=1}^{N_l}u^m_lY^m_l,$$ and $$(-\Delta_{g_0})^{\frac{s-n}{2}}f=:h=\sum_{i=0}^{\infty}\sum_{j=1}^{N_i}h^j_iY^j_i,$$
 where for any $r>0$
 $$(-\Delta_{g_0})^rv=\sum_{l=0}^{\infty}\sum_{m=1}^{N_l}v^m_l\lambda_l^rY^m_l \quad \text{for } v=\sum_{l=0}^{\infty}\sum_{m=1}^{N_l}v^m_lY^m_l\in H^{2r}(S^n).$$
 Then 
 \begin{align}
  (-\Delta_{g_0})^{\frac{s-n}{2}}\Pg u=h \quad\text{on } S^n.\label{1}
 \end{align}
 Multiplying both sides of \eqref{1} by $Y^i_j$ and integrating on $S^n$ one has
 \begin{align}
  \left(\lambda_j+\left(\frac{n-1}{2}\right)^2\right)^{\frac 12}\prod_{k=0}^{\frac{n-3}{2}}
 (\lambda_j+k(n-k-1))\lambda_j^{\frac{s-n}{2}}u^i_j=h^i_j. \notag
\end{align}
Since $h\in H^t(S^n)$ we have 
$$\sum_{l=0}^{\infty}\sum_{m=1}^{N_l}\left(\lambda_l+\left(\frac{n-1}{2}\right)^2\right)\prod_{k=0}^{\frac{n-3}{2}}
 (\lambda_l+k(n-k-1))^2\lambda_l^{s-n}\lambda_l^t(u^m_l)^2<\infty,$$ and hence $u\in H^{s+t}(S^n)$.
\end{proof}

\appendix
 \section{Appendix}

  \begin{lem}[Fundamental solution]\label{funda} For $n\ge 3$ odd integer the function 
  $$\Phi(x):=\frac{(\frac{n-3}{2})!}{2\pi^{\frac{n+1}{2}}}\frac{1}{|x|^{n-1}}=\frac{1}{\gamma_n}(-\Delta)^{\frac{n-1}{2}}\log\frac{1}{|x|}$$ is a fundamental solution of $(-\Delta)^{\frac{1}{2}}$ in $\rn$ in 
 the sense that for all 
 $f\in L^1(\rn)$ we have  $\Phi\ast f\in L_{\frac{1}{2}}(\rn)$ and for all $\varphi\in\s(\rn)$
 \begin{align}\label{funda-2}
  \int_{\rn}(-\Delta)^{\frac{1}{2}}(\Phi\ast f)\varphi dx:=\int_{\rn}(\Phi\ast f)(-\Delta)^{\frac{1}{2}}\varphi dx=\int_{\rn}f\varphi dx.
 \end{align}
\end{lem}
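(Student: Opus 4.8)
The plan is to reduce the convolution identity \eqref{funda-2} to the single \emph{point-source identity}
\[
\int_{\rn}\Phi(x)\,(-\Delta)^{\frac12}\varphi(x)\,dx=\varphi(0)\qquad\text{for all }\varphi\in\s(\rn),
\]
which says precisely that $\Phi$ is the fundamental solution of $(-\Delta)^{\frac12}$ with pole at the origin in the sense of Definition \ref{sol} (with right-hand side $\delta_0$). Granting this, the general identity follows from Fubini and translation invariance: writing $(\Phi\ast f)(x)=\int_{\rn}\Phi(x-y)f(y)\,dy$, substituting $z=x-y$ in the inner integral, and using that $(-\Delta)^{\frac12}$ commutes with translations, one obtains
\[
\int_{\rn}(\Phi\ast f)\,(-\Delta)^{\frac12}\varphi\,dx=\int_{\rn}f(y)\left(\int_{\rn}\Phi(z)\,(-\Delta)^{\frac12}\big[\varphi(\cdot+y)\big](z)\,dz\right)dy=\int_{\rn}f(y)\varphi(y)\,dy,
\]
the last step being the point-source identity applied to the Schwartz function $\varphi(\cdot+y)$.

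To prove the point-source identity I would exploit the factorization stated in the lemma, $\Phi=\frac{1}{\gamma_n}(-\Delta)^{\frac{n-1}{2}}\log\frac1{|x|}$, together with the fact---already used in Lemma \ref{u_0bis}---that $\frac{1}{\gamma_n}\log\frac1{|x|}$ is a fundamental solution of $(-\Delta)^{\frac n2}$. The decisive simplification is that, since $n$ is odd, $\frac{n-1}{2}$ is a \emph{nonnegative integer}, so $(-\Delta)^{\frac{n-1}{2}}$ is a genuine local differential operator. I would therefore integrate by parts $\frac{n-1}{2}$ times to move it onto the Schwartz factor:
\[
\int_{\rn}\Phi\,(-\Delta)^{\frac12}\varphi\,dx=\frac1{\gamma_n}\int_{\rn}\log\frac1{|x|}\,(-\Delta)^{\frac{n-1}{2}}(-\Delta)^{\frac12}\varphi\,dx=\frac1{\gamma_n}\int_{\rn}\log\frac1{|x|}\,(-\Delta)^{\frac n2}\varphi\,dx=\varphi(0),
\]
where $(-\Delta)^{\frac{n-1}{2}}(-\Delta)^{\frac12}=(-\Delta)^{\frac n2}$ holds on $\s(\rn)$ by the Fourier definition, and the last equality is the fundamental-solution property of $\frac1{\gamma_n}\log\frac1{|x|}$. (Alternatively one can verify the point-source identity on the Fourier side by checking $\widehat\Phi(\xi)=|\xi|^{-1}$; here the normalizing constant of $\Phi$ is pinned down by the standard Fourier transform of Riesz kernels and the identity $\Gamma(\tfrac{n-1}{2})=(\tfrac{n-3}{2})!$ valid for odd $n$.)

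It remains to establish $\Phi\ast f\in L_{\frac12}(\rn)$, which is also what legitimizes the Fubini step above. Writing $\Phi(x)=c_n|x|^{-(n-1)}$ with $c_n=\frac{(\frac{n-3}{2})!}{2\pi^{\frac{n+1}{2}}}$, I would bound, via Tonelli,
\[
\int_{\rn}\frac{|(\Phi\ast f)(x)|}{1+|x|^{n+1}}\,dx\le c_n\int_{\rn}|f(y)|\,g(y)\,dy,\qquad g(y):=\int_{\rn}\frac{dx}{|x-y|^{n-1}\,(1+|x|^{n+1})},
\]
and show $g\in L^\infty(\rn)$. The function $g$ is finite for each $y$ because the singularity $|x-y|^{-(n-1)}$ is integrable near $x=y$ (as $n-1<n$) and the integrand decays like $|x|^{-2n}$ at infinity; splitting into the regions $\{|x-y|<1\}$, $\{|x|<|y|/2\}$ and the remainder shows $g(y)\to0$ as $|y|\to\infty$, and $g$ is continuous, hence bounded. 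Then $\int|f|\,g\le\|g\|_\infty\|f\|_{L^1}<\infty$, and the finiteness of this weighted integral simultaneously yields $\Phi\ast f\in L^1_{loc}(\rn)$ and the defining bound for $L_{\frac12}(\rn)$.

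The main obstacle I anticipate is the rigorous justification of the integration by parts in the second paragraph: although $(-\Delta)^{\frac{n-1}{2}}$ is local, both $\log\frac1{|x|}$ and $\Phi\sim|x|^{-(n-1)}$ are singular at the origin, while $(-\Delta)^{\frac12}\varphi$ is not Schwartz but merely decays like $|x|^{-(n+1)}$. One must check that no boundary term or spurious Dirac mass is generated at $x=0$ when transferring the differential operator---equivalently, that the distributional identity $(-\Delta)^{\frac{n-1}{2}}\log\frac1{|x|}=\gamma_n\Phi$ carries no delta contribution, which holds because $|x|^{-(n-1)}$ is locally integrable---and that the tail pairing of $\log\frac1{|x|}$ against the rapidly decaying $(-\Delta)^{\frac n2}\varphi$ converges. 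These are the estimates, analogous to those behind Proposition \ref{ests}, that turn the formal computation into a valid one.
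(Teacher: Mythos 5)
Your argument is correct in substance, but it proves the core identity by a genuinely different route than the paper. The part you share with the paper is the weighted estimate: the paper proves $\int_{\rn}|\Phi\ast f|(1+|x|^{n+1})^{-1}dx\le C\|f\|_{L^1(\rn)}$ by exactly your Tonelli-plus-splitting argument, with a more direct bound for your $g$ (on $\{|x-y|<1\}$ drop the weight and integrate the singularity, elsewhere drop the singularity and integrate the weight, so $\|g\|_{L^\infty}<\infty$ in one line, no continuity argument needed). For the identity \eqref{funda-2} itself, however, the paper never isolates a point-source identity: it proves \eqref{funda-2} first for $f\in C_c^\infty(\rn)$ by citing Theorem 5.9 of \cite{Lieb} --- the Fourier transform of the Riesz kernel $|x|^{-(n-1)}$, which is precisely your parenthetical alternative $\widehat{\Phi}(\xi)=|\xi|^{-1}$ --- and then treats general $f\in L^1(\rn)$ by approximation, choosing $f_k\in C_c^\infty(\rn)$ with $f_k\to f$ in $L^1(\rn)$ and using the weighted estimate (applied to $f_k-f$) to pass to the limit on the left-hand side of \eqref{funda-2}. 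You instead reduce to $f=\delta_0$ by Fubini and translation invariance (legitimate, since your bound on $g$ gives the needed joint integrability), and prove the point-source identity by factoring $\Phi$ through $\log\frac{1}{|x|}$, exploiting that $(-\Delta)^{\frac{n-1}{2}}$ is a local operator because $n$ is odd, together with the fact that $\frac{1}{\gamma_n}\log\frac{1}{|x|}$ is a fundamental solution of $(-\Delta)^{\frac{n}{2}}$. That last fact is the same one the paper invokes without proof in Lemma \ref{u_0bis}, and since Lemma \ref{funda} is used only in Lemma \ref{u_0}, your reliance on it introduces no circularity within the paper. What each approach buys: the paper outsources the entire singular-kernel computation to a textbook Fourier result and only ever pairs against compactly supported smooth data; your route is more self-contained relative to facts the paper already assumes, but it places the real burden on the integration by parts you flag at the end. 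That step does go through: no delta mass arises because each iterate $(-\Delta)^{j}\log\frac{1}{|x|}$, $1\le j\le\frac{n-1}{2}$, is a constant multiple of $|x|^{-2j}$ with $2j\le n-1<n$, hence locally integrable; and iterating Green's identity on $B_R\setminus B_\epsilon$, the inner boundary terms are $O(\epsilon)$ while the outer ones are $O(R^{-2}\log R)$, since $D^\alpha(-\Delta)^{\frac12}\varphi=(-\Delta)^{\frac12}D^\alpha\varphi$ decays like $|x|^{-(n+1)}$ by Proposition \ref{ests}. Still, this is the one place where your write-up is a sketch rather than a proof, and carrying it out in full is arguably more work than the paper's approximation argument.
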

\begin{proof}
To show $\Phi*f\in L_{\frac 12}(\rn)$ we bound 
\begin{align}
 \int_{\rn}\frac{|\Phi*f(x)|}{1+|x|^{n+1}}dx &\leq C\int_{\rn}\frac{1}{1+|x|^{n+1}}\left(\int_{\rn}\frac{1}{|x-y|^{n-1}}|f(y)|dy\right)dx \notag\\
 &=C\int_{\rn}|f(y)|\left(\int_{\rn}\frac{1}{1+|x|^{n+1}}\frac{1}{|x-y|^{n-1}}dx\right)dy \notag\\
 &\leq C\int_{\rn}|f(y)|\left(\int_{B_1}\frac{dx}{|x|^{n-1}}+\int_{\rn}\frac{dx}{1+|x|^{n+1}}\right)dy \notag\\
 &\leq C\|f\|_{L^1(\rn)}. \label{funda_1}
\end{align}
  
 If $f\in\ C_c^{\infty}(\rn)$ then \eqref{funda-2} is true by Theorem 5.9 in \cite{Lieb}. For the general case $f\in L^1(\rn)$ choose $f_k\in C_c^{\infty}(\rn)$ such that
 $f_k\to f$ in $L^1(\rn)$. Then using \eqref{funda_1} with $f\equiv f_k-f$ one has
 $$\int_{\rn}|\Phi\ast (f_k-f)||(-\Delta)^{\frac{1}{2}}\varphi| dx\leq C\int_{\rn}\frac{|\Phi\ast (f_k-f)(x)|}{1+|x|^{n+1}}dx
 \leq C\|f_k-f\|_{L^1(\rn)}\to 0,$$ that is 
 $$\int_{\rn}(\Phi\ast f_k)(-\Delta)^{\frac{1}{2}}\varphi dx\to\int_{\rn}(\Phi\ast f)(-\Delta)^{\frac{1}{2}}\varphi dx.$$ 
Now the proof follows from $$\int_{\rn}(\Phi\ast f_k)(-\Delta)^{\frac{1}{2}}\varphi dx=\int_{\rn}f_k\varphi dx\to\int_{\rn}f\varphi dx.$$ 
 \end{proof}
 
 Proof of the following Proposition can be found in \cite{Hyd}.
 \begin{prop}\label{ests} For any $s>0$ and $\varphi\in \s(\rn)$ we have
$$|(-\Delta)^s \varphi(x)|\le \frac{C}{|x|^{n+2s}}.$$
\end{prop}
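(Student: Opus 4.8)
The statement is really a statement about the behaviour of $(-\Delta)^s\varphi$ at infinity, so I would first reduce to the regime $|x|\ge 1$. Since $\widehat{(-\Delta)^s\varphi}(\xi)=|\xi|^{2s}\hat\varphi(\xi)$ lies in $L^1(\rn)$ (because $\hat\varphi\in\s(\rn)$ and $|\xi|^{2s}$ is locally integrable and polynomially bounded), the function $(-\Delta)^s\varphi$ is bounded on $\rn$; hence for $|x|\le 1$ the inequality holds trivially once $C\ge\|(-\Delta)^s\varphi\|_{L^\infty}$, because the right-hand side is then $\ge C$. I would also dispose of the case $s\in\mathbb N$ at once: there $(-\Delta)^s\varphi$ is again a Schwartz function and decays faster than any power of $|x|$.

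For non-integer $s$ the plan is to use a hypersingular representation of the fractional Laplacian. Fix the integer $k:=\lfloor s\rfloor+1>s$ and write
$$(-\Delta)^s\varphi(x)=c_{n,s,k}\int_{\rn}\frac{\delta^{2k}_z\varphi(x)}{|z|^{n+2s}}\,dz,\qquad \delta^{2k}_z\varphi(x):=\sum_{j=-k}^{k}(-1)^{j}\binom{2k}{k+j}\varphi(x+jz),$$
where $c_{n,s,k}$ is the normalising constant (absorbing signs) chosen so that the symbol of the right-hand side equals $|\xi|^{2s}$; this is the standard higher-order analogue of the singular-integral formula valid for $0<s<1$, and is verified on the Fourier side. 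The advantage over the naive principal-value integral is that the centred difference $\delta^{2k}_z\varphi$ already builds in all the cancellations: it annihilates polynomials of degree $<2k$, so $\delta^{2k}_z\varphi(x)=O(|z|^{2k})$ as $z\to0$, while it is merely bounded as $|z|\to\infty$. Hence the integral converges absolutely, since $2k>2s$ gives integrability at the origin and $n+2s>n$ gives it at infinity.

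The core estimate is then a two-region split of the $z$-integral at radius $|x|/(4k)$. On the inner region $|z|<|x|/(4k)$ every point $x+jz$ with $|j|\le k$ satisfies $|x+jz|\ge|x|-k|z|\ge\tfrac34|x|$, so Taylor's theorem gives $|\delta^{2k}_z\varphi(x)|\le C|z|^{2k}\sup_{|w|\ge 3|x|/4}|D^{2k}\varphi(w)|\le C_N|z|^{2k}|x|^{-N}$ for every $N$; integrating $|z|^{2k-n-2s}$ over this ball (finite because $2k>2s$) produces a bound $C_N|x|^{2k-2s-N}$, which is negligible once $N$ is large. On the outer region $|z|\ge|x|/(4k)$ one uses $|z|^{-(n+2s)}\le C|x|^{-(n+2s)}$ and estimates each translate separately: the terms with $j\ne0$ contribute at most $C|x|^{-(n+2s)}\int_{\rn}|\varphi(x+jz)|\,dz=C|x|^{-(n+2s)}|j|^{-n}\|\varphi\|_{L^1}$, while the $j=0$ term is controlled by $C|\varphi(x)|\,|x|^{-2s}\le C_N|x|^{-N}$. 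Summing the finitely many pieces yields $|(-\Delta)^s\varphi(x)|\le C|x|^{-(n+2s)}$ for $|x|\ge1$, which is the claim.

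I expect the main obstacle to lie in the representation formula of the second paragraph: one must justify the higher-order hypersingular identity for general $s>0$ (not only $0<s<1$) and confirm that the centred difference kills polynomials of degree $<2k$, so that $\delta^{2k}_z\varphi=O(|z|^{2k})$ near the origin. Once the representation and its absolute convergence are secured, the decay follows from the elementary region split above; the single delicate point there is to take the splitting radius proportional to $|x|/(4k)$ rather than $|x|/2$, so that all the translated arguments $x+jz$ in the inner region remain in the range where $\varphi$ and its derivatives are super-polynomially small. An alternative route — writing $s=m+\sigma$ and applying the $0<\sigma<1$ formula to $(-\Delta)^m\varphi$ — also works, but it forces one to track the vanishing of the moments $\int y^\beta (-\Delta)^m\varphi\,dy$ for $|\beta|\le 2m-1$ in order to recover the sharp power $|x|^{-(n+2s)}$, which I find less transparent than the single high-order difference above.
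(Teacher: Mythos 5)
Your proof is correct, but there is nothing in this paper to compare it against line by line: the author does not prove Proposition \ref{ests} here, deferring it to the companion preprint \cite{Hyd}. Taken as an independent, self-contained argument, your route is sound. The one point that genuinely needs justification is the one you flag yourself, the hypersingular representation for arbitrary non-integer $s>0$ via the centered difference $\delta^{2k}_z$ with $k=\lfloor s\rfloor+1$; this does check out on the Fourier side, since $\delta^{2k}_z$ has symbol $4^k\sin^{2k}(z\cdot\xi/2)$, the normalizing integral $\int_{\rn}4^k\sin^{2k}(w_1/2)\,|w|^{-n-2s}\,dw$ is finite precisely because $2k>2s$ and is strictly positive (so $c_{n,s,k}$ exists and is nonzero), and Fubini is licensed by the absolute convergence you establish, identifying the integral with the paper's Fourier-multiplier definition of $(-\Delta)^s\varphi$. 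The remaining steps are all solid: the $2k$-th difference in direction $z$ equals $|z|^{2k}$ times a $2k$-th directional derivative at an intermediate point, which gives the inner bound $C_N|z|^{2k}|x|^{-N}$ on $\{|z|<|x|/(4k)\}$ (your choice of splitting radius correctly keeps all $x+jz$ in $\{|w|\ge 3|x|/4\}$), and integrating produces $C_N|x|^{2k-2s-N}$, negligible for $N\ge n+2k$; the outer region yields exactly $C|x|^{-(n+2s)}$ via the change of variables for $j\ne 0$ and Schwartz decay for $j=0$; and the cases $|x|\le1$ and $s\in\mathbb{N}$ are trivial as you say. It is worth noting what your approach buys compared with the more common factorization $(-\Delta)^s=(-\Delta)^{\sigma}(-\Delta)^m$, $\sigma\in(0,1)$, which you mention as an alternative: that route gives only $|x|^{-(n+2\sigma)}$ unless one exploits that all moments of $(-\Delta)^m\varphi$ up to order $2m-1$ vanish (equivalently, that its Fourier transform vanishes to order $2m$ at the origin), whereas your single high-order difference encodes this cancellation automatically, at the cost of verifying the representation formula once.
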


\end{document}